\documentclass[reqno, 12pt]{amsart}
\usepackage{amsfonts}
\usepackage{mathrsfs}
\usepackage{amssymb,url}
\usepackage{graphicx}

\date{}

\allowdisplaybreaks[4]
\theoremstyle{plain}
\newtheorem{thm}{Theorem}[section]

\newtheorem{lem}[thm]{Lemma}

\newtheorem{rem}{Remark}

\theoremstyle{definition}

\newtheorem{Def}{Definition}
\newtheorem{prop}{Proposition}
\newtheorem{cla}{Claim}

\usepackage[colorlinks,linkcolor=blue,urlcolor=red,linktocpage]{hyperref}

\numberwithin{equation}{section}

\begin{document}

\title
[~~~]{On the functional Blaschke-Santal\'{o} inequality}

\author[Youjiang Lin]{Youjiang Lin}
\address{School of Mathematical Sciences, Peking University, Beijing, 100871, China;
Department of Mathematics, Department of Mathematics, Shanghai University, Shanghai, 200444, China}
 \email{\href{mailto: YOUJIANG LIN
<lxyoujiang@126.com>}{lxyoujiang@126.com}}
\author[Gangsong Leng]{Gangsong Leng}
\address{Department of Mathematics, Shanghai University, Shanghai, 200444, China} \email{\href{mailto:
Gangsong Leng <gleng@staff.shu.edu.cn>}{gleng@staff.shu.edu.cn} }

\begin{abstract} In this paper, using functional Steiner symmetrizations, we show
that Meyer and Pajor's proof of the Blaschke-Santal\'{o} inequality
can be extended to the functional setting.
\end{abstract}

\thanks{2010 Mathematics Subject
Classification. 52A10, 52A40.}

\keywords{Convex body; Polar body; Parallel sections homothety
bodies; Mahler conjecture; Cylinder}

\thanks{The authors would like to acknowledge the support from China Postdoctoral Science
Foundation Grant 2013M540806, National Natural Science Foundation of China under grant
11271244 and National Natural Science Foundation of China under grant 11271282 and the
973 Program 2013CB834201.}

\maketitle

\section{Introduction}

For a convex body $K\subset \mathbb{R}^n$ and a point
$z\in\mathbb{R}^n$, the polar body $K^z$ of $K$ with respect to $z$
is the convex set defined by $ K^{z}=\{y\in \mathbb{R}^n: \langle
y-z,x-z\rangle\leq 1\;{\rm for}\;{\rm every}\;x\in K\}$. The {\it
Santal\'{o} point} $s(K)$ of $K$ is a point for which
$V_n(K^{s(K)})=\min_{z\in int(K)}V_n(K^{z})$, where $V_n(K)$ denotes
the volume of set $K$.
 The Blaschke-Santal\'{o}
inequality \cite{Bl85,Sa49, Sc93} states that
$V_n(K)V_n(K^{s(K)})\leq V_n(B_2^n)^2$, where $B_2^n$ is the
Euclidean ball.

 For a log-concave function $f:\mathbb{R}^n\rightarrow
[0,\infty)$ and a point $z\in\mathbb{R}^n$, its polar with respect
to $z$ is defined by $f^{z}(y)=\inf_{x\in
\mathbb{R}^n}\frac{e^{-\langle x-z,y-z\rangle}}{f(x)}$. The {\it
Santal\'{o} point} $s(f)$ of $f$ is the point $z_0$ satisfying $\int
f^{z_0}=\inf_{z\in\mathbb{R}^n}\int f^{z}$.

The {\it functional Blaschke-Santal\'{o} inequality} of log-concave
functions is the analogue of Blaschke-Santal\'{o} inequality  of
convex bodies.
\begin{thm}\label{a1} (Artstein, Klartag, Milman). Let $f:\mathbb{R}^n\rightarrow [0,+\infty)$
be a log-concave function such that $0<\int f<\infty$. Then,
$\int_{\mathbb{R}^n} f\int_{\mathbb{R}^n} f^{s(f)}\leq(2\pi)^n$ with
equality holds exactly for Gaussians.
\end{thm}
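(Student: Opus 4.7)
The plan is to mimic Meyer and Pajor's Steiner-symmetrization proof of the Blaschke--Santal\'o inequality in the functional category. I would first translate $f$ so that its Santal\'o point lies at the origin; then by definition $\int f^{s(f)} = \int f^0 = \inf_z \int f^z$, and the quantity to be bounded is $\int f \cdot \int f^0$. The product $\int f \cdot \int f^0$ is invariant under the scalings $f \mapsto cf$ and $f(x) \mapsto f(\lambda x)$, so one may additionally normalize (e.g.\ $\int f = \int f^0$).

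The core tool is a functional Steiner symmetrization. For $u \in S^{n-1}$, let $S_u f$ be the log-concave function whose super-level sets $\{S_u f \geq c\}$ are the classical Steiner symmetrizations of $\{f \geq c\}$ with respect to $u^\perp$; equivalently, on each line parallel to $u$ the function $f$ is replaced by its one-dimensional symmetric decreasing rearrangement. Log-concavity is preserved, and the layer-cake formula yields $\int S_u f = \int f$. The decisive step is the monotonicity
$$\int (S_u f)^0 \;\geq\; \int f^0,$$
with polars taken at the origin. Since $0$ need only be an admissible (not necessarily the optimal Santal\'o) center for $S_u f$, the Santal\'o integral of $S_u f$ can only grow further, so this monotonicity suffices.

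To establish it, I would slice $f$ perpendicular to $u$ and reduce, via Fubini, to a two-slice inequality: for each $t > 0$, the pair of log-concave functions on $u^\perp$ given by $x' \mapsto f(x', t)$ and $x' \mapsto f(x', -t)$ should, when replaced by a single symmetric common slice (the functional analogue of Meyer--Pajor's parallel-sections-homothety construction, announced in the keywords), produce a larger or equal contribution to $\int f^0$. This rests on a one-dimensional identity expressing the slices of $f^0$ as Legendre-type transforms of the slices of $f$, combined with a Pr\'ekopa--Leindler / one-dimensional Blaschke--Santal\'o estimate for log-concave functions on $\mathbb{R}$. Carrying out this slicewise inequality rigorously, and handling the interplay between the Legendre transform in the $u$-direction and the integrated polars over $u^\perp$, is the main obstacle of the argument.

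Once monotonicity is in hand, one iterates $S_{u_k}$ along a sequence of directions whose composition converges, in an appropriate functional sense, to a radially symmetric log-concave $f_\infty$ with $\int f_\infty = \int f$ and $\int f_\infty^0 \geq \int f^0$. It then suffices to verify the inequality for radial log-concave functions. Writing $f_\infty(x) = e^{-\phi(|x|)}$ with $\phi$ convex, $f_\infty^0$ is also radial and its radial profile is given by a Legendre-type transform of $\phi$, so a polar-coordinate computation reduces $\int f_\infty \cdot \int f_\infty^0$ to a one-dimensional integral that is readily seen to be maximized precisely by Gaussians, yielding the bound $(2\pi)^n$ together with the stated equality case.
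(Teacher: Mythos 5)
Your high-level plan (functional Steiner symmetrization iterated to reach a symmetric function, then verify the inequality there) is in the spirit of the paper, but the core reduction step as you state it has a logical reversal, and the monotonicity claim on which it rests is not correct as stated. You want to show that symmetrization does not decrease the Santal\'o product, i.e.\ $\int S_uf \cdot \int (S_uf)^{s(S_uf)} \geq \int f\cdot\int f^{s(f)}$. Since $\int S_uf = \int f$, the needed inequality is $\int (S_uf)^{s(S_uf)} \geq \int f^{0}$ where $0 = s(f)$. But the Santal\'o integral of $S_uf$ is an \emph{infimum} over centers, so $\int (S_uf)^{s(S_uf)} \leq \int (S_uf)^{0}$; the observation that ``$0$ need only be an admissible center'' therefore points in the wrong direction and your proposed monotonicity $\int(S_uf)^0 \geq \int f^0$ does \emph{not} suffice. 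Worse, that monotonicity itself generally fails: the correct statement (the paper's Lemma \ref{79}) is $\int (S_Hf)^z \geq 4\lambda(1-\lambda)\int f^z$ where $H$ is $\lambda$-separating for $f^z$ and $z \in H$, and this is an improvement only when $\lambda = 1/2$, i.e.\ when $H$ is \emph{medial} for $f^z$. Choosing $z$ to be the Santal\'o point $s(f)$ (which is the \emph{barycenter} of $f^{s(f)}$, not a medial point) does not make $u^\perp$ through $s(f)$ medial for $f^{s(f)}$, so for an arbitrary direction $u$ the inequality can go the wrong way.

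The paper circumvents both issues by a different bookkeeping: it symmetrizes the \emph{polar} $f^{z_1}$ (defining $f_1$ by $f_1^{z_1} = S_{H_1}(f^{z_1})$, with $H_1$ medial for $f$ and $z_1 = s_{H_1}(f)\in H_1$), so that $\int f_1^{z_1} = \int f^{z_1}$ is preserved while Lemma \ref{79} gives $\int f_1 \geq \int f$; at the next step the medial hyperplane $H_2$ is chosen for $f_1$ (not for $f$), and $z_2 = s_{H_1\cap H_2}(f_1)$, with Lemma \ref{73} guaranteeing that restricting the Santal\'o point to $H_1\cap H_2$ does not decrease $\int f_i^{z_i}$. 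This requires only $n$ symmetrizations along orthonormal directions (reaching an unconditional function), so there is no convergence argument to a radial limit; the base case is then handled by Lemma \ref{77} for unconditional log-concave functions via the substitution $x_i = e^{y_i}$ and Pr\'ekopa's inequality, rather than a polar-coordinate computation for radial profiles. Finally, note that your slicewise two-slice inequality in the direction $u$ is precisely what the paper's Lemma \ref{79} makes rigorous via Claim \ref{15}, Pr\'ekopa's inequality, and Ball's one-dimensional Lemma \ref{74}; without a statement at that level of precision the ``main obstacle'' you flag remains unresolved, and the equality case for Gaussians is asserted but not proved by the limiting argument as sketched.
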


When $f$ is even, the functional Blaschke-Santal\'{o} inequality
follows from an earlier inequality of  Ball \cite{Ba86}; and in
\cite{Fr07}, Fradelizi and Meyer proved something more general (see
also \cite{Le09}). Lutwak and Zhang \cite{Lu97} and Lutwak et al.
\cite{Lu04} gave other very different forms of the
Blaschke-Santal\'{o} inequality.  In this paper, we give a more
general result than Theorem \ref{a1}, which becomes into a special
case of $\lambda=1/2$ in Theorem \ref{2}.
\begin{thm}\label{2}
Let $f:\mathbb{R}^n\rightarrow[0,+\infty)$ be a log-concave function
such that $0<\int f<\infty$. Let $H$ be an affine hyperplane and let
$H_+$ and $H_-$ denote two closed half-spaces bounded by $H$. If
$\lambda\in (0,1)$ satisfies $\lambda
\int_{\mathbb{R}^n}f=\int_{H_+}f$. Then there exists $z\in H$ such
that
\begin{eqnarray}\label{12}
\int_{\mathbb{R}^n}f\int_{\mathbb{R}^n}f^z\leq
\frac{1}{4\lambda(1-\lambda)}(2\pi)^n.
\end{eqnarray}
\end{thm}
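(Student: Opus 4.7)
The plan is to adapt Meyer–Pajor's proof of their weighted Blaschke–Santal\'{o} inequality for convex bodies to the functional setting, by combining functional Steiner symmetrizations in directions parallel to $H$ with a reduction to a one-dimensional weighted statement.

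After a translation, assume $H=\{x_n=0\}$ and fix a candidate point $z=(z',0)\in H$ (for instance the minimizer of $w\mapsto\int f^w$ on $H$). Apply successive functional Steiner symmetrizations of $f$ in the directions $e_1,\ldots,e_{n-1}$, each with respect to the hyperplane $\{x_i=z_i\}$ through $z$. These symmetrizations preserve $\int f$, preserve log-concavity, and preserve the value $\lambda=\int_{H_+}f/\int f$ (because the directions of symmetrization lie inside $H$), while not decreasing $\int f^z$ — the functional analog of the classical property of Steiner symmetrization for polar bodies. After iteration, each slice $f(\cdot,t)$ becomes even about $z'$ in $\mathbb{R}^{n-1}$. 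Setting $\phi(t):=\int_{\mathbb{R}^{n-1}}f(x',t)\,dx'$, this is log-concave on $\mathbb{R}$ (by Pr\'{e}kopa–Leindler) with $\int\phi=\int f$ and $\int_0^\infty\phi=\lambda\int\phi$. From the factorization
\[
f^z(y',y_n)=\inf_{x_n\in\mathbb{R}}e^{-x_ny_n}\,f(\cdot,x_n)^{z'}(y'),
\]
Fubini together with the $(n-1)$-dimensional even functional Blaschke–Santal\'{o} inequality applied slicewise yields
\[
\int_{\mathbb{R}^{n-1}}f^z(y',y_n)\,dy'\leq(2\pi)^{n-1}\phi^0(y_n),\qquad \int_{\mathbb{R}^n}f^z\leq(2\pi)^{n-1}\int_{\mathbb{R}}\phi^0.
\]

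It remains to establish the one-dimensional weighted inequality $\int\phi\cdot\int\phi^0\leq\frac{2\pi}{4\lambda(1-\lambda)}$ for log-concave $\phi:\mathbb{R}\to[0,\infty)$ with $\int_0^\infty\phi=\lambda\int\phi$. Reflect the two halves of $\phi$ across $0$ to form the even log-concave functions $\phi^+$ and $\phi^-$ of masses $2\lambda\int\phi$ and $2(1-\lambda)\int\phi$. Introduce the one-sided polars $\phi_+^0(y):=\inf_{x\geq 0}e^{-xy}/\phi(x)$ and $\phi_-^0(y):=\inf_{x\leq 0}e^{-xy}/\phi(x)$; these are respectively non-increasing and non-decreasing on $\mathbb{R}$, while $\phi^0=\min(\phi_+^0,\phi_-^0)$. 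A crossover estimate at the point where $\phi_+^0$ and $\phi_-^0$ meet gives
\[
\int\phi^0\leq\tfrac{1}{2}\bigl(\int(\phi^+)^0+\int(\phi^-)^0\bigr);
\]
the even 1D functional Blaschke–Santal\'{o} inequality (Ball \cite{Ba86}) applied to $\phi^\pm$ bounds the right-hand side by $\tfrac{\pi}{\int\phi}\bigl(\tfrac{1}{2\lambda}+\tfrac{1}{2(1-\lambda)}\bigr)=\tfrac{\pi}{2\lambda(1-\lambda)\int\phi}$, which is the desired one-dimensional bound. Multiplying by the $(2\pi)^{n-1}$ factor from the slicing step produces Theorem \ref{2}.

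The main obstacles are: (i) verifying that functional Steiner symmetrization in a direction parallel to $H$ does not decrease $\int f^z$ when $z$ lies on the symmetrization hyperplane — the functional analog of the polar-volume monotonicity of Steiner symmetrization for convex bodies; and (ii) handling the case in the one-dimensional step where the mode of $\phi$ is not at the origin, since then the reflected functions $\phi^+,\phi^-$ need not themselves be log-concave. A natural resolution of (ii) is an auxiliary exponential tilting $\phi(x)\mapsto\phi(x)e^{tx}$, chosen to bring the mode of $\phi$ to $0$ before reflecting, together with a careful accounting of how the tilt affects the two integrals $\int\phi$ and $\int\phi^0$.
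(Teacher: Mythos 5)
Your plan diverges substantially from the paper's. The paper performs $n$ successive Steiner symmetrizations (the first with respect to the given hyperplane $H$, the rest with respect to \emph{medial} hyperplanes in the remaining $n-1$ directions), updates the polar center at each step via Lemmas \ref{68} and \ref{73}, and finishes with the unconditional case (Lemma \ref{77}). You instead perform only $n-1$ symmetrizations (all parallel to $H$, about the fixed hyperplanes $\{x_i=z_i\}$ through a fixed $z$), slice along the $e_n$ direction, and reduce to a one-dimensional weighted inequality. The slicing step, i.e. that after the even $(n-1)$-dimensional Blaschke--Santal\'o inequality one gets $\int_{\mathbb R^{n-1}}f^z(y',y_n)\,dy'\le (2\pi)^{n-1}\phi^0(y_n)$, is verified correctly, and this is an attractive dimension-reduction in the spirit of Lehec.

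The serious gap is the one you flag as obstacle (i). The ``classical property of Steiner symmetrization for polar bodies'' that you appeal to does not exist in the form you need: it is not true in general that symmetrizing $f$ about a hyperplane $H'$ through a fixed point $z$ (even $z=s_H(f)$) fails to decrease $\int f^z$. What is available (and what Lemma \ref{79} supplies) is $\int (S_{H'}f)^z\ge 4\lambda'(1-\lambda')\int f^z$ where $\lambda'$ is the separation ratio of $H'$ for $f^z$; this gives no loss only when $H'$ is \emph{medial for $f^z$}. Your hyperplanes $\{x_i=z_i\}$ pass through the barycenter of $f^z$ in direction $e_i$ (by Lemma \ref{68}), which by Gr\"unbaum only pins $\lambda'$ to $[1/e,1-1/e]$, so $4\lambda'(1-\lambda')$ can drop to about $0.93$. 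Moreover, after the first symmetrization the Santal\'o point $s_H(f_1)$ generally moves, and Lemma \ref{73} does not apply because $H\not\subset\{x_i=z_i\}$, so even the choice $z=s_H(f)$ is not stable. This is precisely why Meyer--Pajor, and this paper, simultaneously (a) choose each subsequent hyperplane to be \emph{medial} rather than passing through a pre-assigned point and (b) shift the polar center $z_i\to z_{i+1}=s_{(H_1\cap\cdots\cap H_{i+1})}(f_i)$, using $\int f_i^{z_{i+1}}\ge\int f_i^{z_i}$ (restriction of the minimization to a smaller affine subspace) to absorb the shift. Without that device your chain of inequalities is not established, and fixing $z$ once and for all has no known justification.

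Obstacle (ii) is less serious but also not resolved as stated: when the mode of $\phi$ is not at $0$, the reflected functions $\phi^{\pm}$ are not log-concave, so Ball's even inequality does not apply to them, and the exponential tilt $\phi\mapsto\phi\,e^{tx}$ changes both $\int\phi$ and the separation ratio $\lambda$ in a way you would need to control carefully. A clean fix exists though: the one-dimensional weighted statement $\int\phi\int\phi^0\le\frac{2\pi}{4\lambda(1-\lambda)}$ is exactly the $n=1$ case of Theorem \ref{2}, and it follows directly from Lemma \ref{79} applied to $\phi^0$ with $z=0$, $H=\{0\}$, together with the even case. So (ii) is repairable by invoking Lemma \ref{79}; (i) is the real obstruction, and closing it would require either the medial-hyperplane/moving-center machinery of the paper or a genuinely new monotonicity theorem.
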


 In \cite{Leh09}, Lehec proved a very general functional version for non-negative Borel functions, Theorem \ref{2} is a particular case of result of Lehec.
 Lehec's proof is by induction
on the dimension, and the proof is by functional Steiner
symmetrizations. In fact, Mayer and Pajor \cite{Me90} have proved
the Blaschke-Santal\'{o} inequality for convex bodies, here we show
that Meyer and Pajor's proof of the Blaschke-Santal\'{o} inequality
can be extended to the functional setting.  It has recently come to
our attention that in a remark of \cite{Fr07}, Fradelizi  and Meyer
expressed the same idea to prove the functional Blaschke-Santal\'{o}
inequality.

\section{Notations and background materials}
Let $|\cdot|$ denote the Euclidean norm. Let ${\rm int} A$ denote
the interior of $A\subset\mathbb{R}^n$. Let ${\rm cl} A$ denote the
closure of $A$. Let ${\rm dim} A$ denote the dimension of $A$. A set
$C\subset\mathbb{R}^n$ is called a {\it convex cone} if $C$ is
convex and nonempty and if $x\in C$, $\lambda\geq0$ implies $\lambda
x\in C$. We define $C^{\ast}:=\{x\in\mathbb{R}^n:\langle
x,y\rangle\leq0\;\;{\rm for}\;{\rm all}\;y\in C\}$ and call this the
{\it dual cone} of $C$.

For a non-empty convex set $K\subset\mathbb{R}^n$ and an affine
hyperplane $H$ with unit normal vector $u$, the {\it Steiner
symmetrization} $S_H K$ of $K$ with respect to $H$ is defined as
$S_H K:=\{x^{\prime}+\frac{1}{2}(t_1-t_2)u:\;x^{\prime}\in
P_H(K),\;t_i\in I_K(x^{\prime})\;{\rm for}\;i=1,2\}$, where
$P_H(K):=\{x^{\prime}\in H:\;x^{\prime}+tu\in K\; {\rm for}\;{\rm
some}\;t\in\mathbb{R}\}$ is the projection of $K$ onto $H$ and
$I_K(x^{\prime}):=\{t\in\mathbb{R}:\;x^{\prime}+tu\in K\}$.

Let $\bar{\mathbb{R}}=\mathbb{R}\cup\{-\infty,\infty\}$. For a given
function $f:\mathbb{R}^n\rightarrow\bar{\mathbb{R}}$ and for
$\alpha\in\bar{\mathbb{R}}$ we use the abbreviation
$\{f=\alpha\}:=\{x\in\mathbb{R}^n:f(x)=\alpha\}$, and
$\{f\leq\alpha\}$, $\{f<\alpha\}$ etc. are defined similarly. A
function $f:\mathbb{R}^n\rightarrow\bar{\mathbb{R}}$ is called {\it
proper} if $\{f=-\infty\}=\emptyset$ and
$\{f=\infty\}\neq\mathbb{R}^n$. A function $\phi$ is called {\it
convex} if $\phi$ is proper and $\phi(\alpha
x+(1-\alpha)y)\leq\alpha \phi(x)+(1-\alpha)\phi(y)$ for all
$x,y\in\mathbb{R}^n$ and for any $0\leq\lambda\leq1$. A function $f$
is called {\it log-concave} if $f=e^{-\phi}$, where $\phi$ is a
convex function. A function
$f:\mathbb{R}^n\rightarrow\mathbb{R}\cup\{+\infty\}$ is called {\it
coercive} if $\lim_{|x|\rightarrow+\infty}f(x)=+\infty$.
 A function $f$
is called {\it symmetric} about $H$ if for any $x^{\prime}\in H$ and
$t\in\mathbb{R}$, $f(x^{\prime}+tu)=f(x^{\prime}-tu)$. A function
$f: \mathbb{R}^n\rightarrow\mathbb{R}$ is called {\it unconditional}
about $z$ if $f(x_1-z_1,\dots,
x_n-z_n)=f(|x_1-z_1|,\dots,|x_n-z_n|)$ for every
$(x_1,\dots,x_n)\in\mathbb{R}^n$. If $z=0$, then $f$ is called {\it
unconditional}.

The {\it effective domain} of convex function $\phi$ is the nonempty
set ${\rm dom}\phi:=\{\phi<\infty\}$. The {\it support} of function
$f$ is the set ${\rm supp}f:=\{f\neq 0\}$. For log-concave function
$f=e^{-\phi}$, it is clear that ${\rm supp}f={\rm dom}\phi$. The
nonempty set ${\rm
epi}\phi:=\{(x,r)\in\mathbb{R}^n\times\mathbb{R}:\;r\geq \phi(x)\}$
denote the {\it epigraph} of convex function $\phi$.

For an affine subspace $G$ of $\mathbb{R}^n$, let $G^{\perp}$ denote
the {\it orthogonal complement} of $G$, we have $G^{\bot}=\{x\in
\mathbb{R}^n: \langle x, y-y^{\prime}\rangle=0\; {\rm for\;
every\;}y, y^{\prime}\in G\}$. The {\it Santal\'{o} point}
$s_{G}(f)$ of $f$ about $G$ is a point satisfying $\int
f^{s_{G}(f)}=\inf_{z\in G}\int f^z$. Let $f$ be a log-concave
function such that $0<\int f<\infty$, and let $H_+$ and $H_-$ be two
half-spaces bounded by an affine hyperplane $H$; let $0<\lambda<1$;
we shall say that $H$ is {\it $\lambda$-separating} for $f$ if
$\int_{H_+}f\int_{H_-}f=\lambda(1-\lambda)\left(\int_{\mathbb{R}^n}f\right)^2$
and when $\lambda=1/2$, we shall say that $H$ is {\it medial} for
$f$. For a function $\phi: \mathbb{R}^n\rightarrow
\bar{\mathbb{R}}$, its Legendre transform about $z$ is defined by
$\mathcal {L}^{z}\phi(y)=\sup_{x\in\mathbb{R}^n}[\langle
x-z,y-z\rangle-\phi(x)]$. If $f(x)=e^{-\phi(x)}$, where $\phi(x)$ is
a convex function, then $\label{b22}f^{z}(y)=e^{-\mathcal
{L}^{z}\phi(y)}$. Since $\mathcal {L}^z(\mathcal {L}^z\phi)=\phi$
for a convex function $\phi$, $(f^z)^z=f$. If $z=0$, we shall use
the simpler notation $\mathcal {L}$ for $\mathcal {L}^{0}$.

Given two functions $f,g:\mathbb{R}^n\rightarrow [0,\infty)$, their
{\it Asplund product} is defined by $(f\star
g)(x)=\sup_{x_1+x_2=x}f(x_1)g(x_2)$. The {\it $\lambda$-homothety}
of a function $f$ is defined as $(\lambda\cdot
f)(x)=f^{\lambda}(\frac{x}{\lambda})$. Then, the classical
Pr\'{e}kopa inequality (see Pr\'{e}kopa \cite{Pr71,Pr73}) can be
stated as follows: Given $f,g:\mathbb{R}^n\rightarrow[0,+\infty)$
and $0<\lambda<1$, $\int(\lambda\cdot f)\star((1-\lambda)\cdot
g)\geq\left(\int f\right)^{\lambda}\left(\int g\right)^{1-\lambda}$.
The following lemma, as a particular case of a result due to Ball
\cite{Ball88}, was proved by
  Meyer and Pajor in \cite{Me90}.
\begin{lem}\label{74}\cite{Me90}
Let $f_0$, $f_1$, $f_2: \mathbb{R}^{+}\rightarrow\mathbb{R}^{+}$ be
three functions such that
$0<\int^{+\infty}_{0}f_i<\infty,\;i=0,1,2$, they are continuous and
suppose that $f_0\left(\frac{2xy}{x+y}\right)\geq
f_1(x)^{\frac{y}{x+y}}f_2(y)^{\frac{x}{x+y}}$ for every $x,y>0$.
Then one has
$$\frac{1}{\int^{+\infty}_{0}f_0(t)dt}\leq\frac{1}{2}\left(\frac{1}
{\int^{+\infty}_{0}f_1(t)dt}+\frac{1}{\int^{+\infty}_{0}f_2(t)dt}\right).$$
\end{lem}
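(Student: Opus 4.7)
The plan is to transform the harmonic--mean hypothesis into an arithmetic--mean (Pr\'ekopa--Leindler--type) form by the reciprocal substitution $x=1/u$, $y=1/v$, and then to deduce the harmonic mean of integrals by a 1D slicing argument. Setting $h_{i}(u):=f_{i}(1/u)/u^{2}$, one has $\int_{0}^{\infty}h_{i}=\int_{0}^{\infty}f_{i}$ (Jacobian of $u=1/x$), and $2xy/(x+y)$ becomes $1/((u+v)/2)$. A short computation that makes use of the weighted AM--GM inequality $u^{2u/(u+v)}v^{2v/(u+v)}\geq((u+v)/2)^{2}$ converts the hypothesis into
\[
h_{0}\bigl((u+v)/2\bigr)\;\geq\;h_{1}(u)^{u/(u+v)}\,h_{2}(v)^{v/(u+v)},\qquad u,v>0.
\]

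Reparametrising by $r=(u+v)/2$ and $\alpha=u/(u+v)\in(0,1)$, so that $u=2\alpha r$ and $v=2(1-\alpha)r$, this family of inequalities reads
\[
h_{0}(r)\;\geq\;h_{1}(2\alpha r)^{\alpha}\,h_{2}\bigl(2(1-\alpha)r\bigr)^{1-\alpha},\qquad r>0,\ \alpha\in(0,1),
\]
and the conclusion is equivalent to showing $\int h_{0}\geq 2(\int h_{1})(\int h_{2})/(\int h_{1}+\int h_{2})$ (the harmonic mean of $\int h_{1}$ and $\int h_{2}$). To extract this I would slice at the optimal weight $\alpha^{*}=(\int h_{2})/(\int h_{1}+\int h_{2})$; for this $\alpha^{*}$, integrating the pointwise bound and applying a one--dimensional Pr\'ekopa--Leindler/Borell--Brascamp--Lieb inequality to the rescaled functions $r\mapsto h_{1}(2\alpha^{*}r)$ and $r\mapsto h_{2}(2(1-\alpha^{*})r)$ should yield $\int h_{0}\geq C(\alpha^{*})(\int h_{1})^{\alpha^{*}}(\int h_{2})^{1-\alpha^{*}}$, and the particular form of $\alpha^{*}$ is precisely what converts the resulting geometric mean into the desired harmonic mean.

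The main obstacle is that the exponents $u/(u+v)$ and $v/(u+v)$ depend on $(u,v)$, so Pr\'ekopa--Leindler does not apply in its fixed--weight form to the pointwise bound. A clean workaround is to reduce by approximation to the indicator case $f_{i}=c_{i}\mathbf{1}_{[0,a_{i}]}$: there the hypothesis determines the extremal $f_{0}$ explicitly, and the conclusion becomes the one--line algebraic identity $1/(2a_{1}a_{2}/(a_{1}+a_{2}))=\tfrac{1}{2}(1/a_{1}+1/a_{2})$ (with equality when $c_{1}=c_{2}$). The general case then follows from a layer--cake decomposition of each $f_{i}$ combined with a monotone convergence argument, exploiting that the hypothesis is preserved under such decompositions and that the continuity assumption on the $f_{i}$ lets the sharp constants pass to the limit.
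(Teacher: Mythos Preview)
The paper does not supply a proof of this lemma at all: it is quoted from Meyer--Pajor \cite{Me90} (as a special case of a result of Ball \cite{Ball88}), so there is no ``paper's own proof'' to compare against. What I can do is assess your argument on its merits.

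Your opening move is correct and is exactly the standard one. The substitution $x=1/u$, $y=1/v$, $h_i(u)=u^{-2}f_i(1/u)$ preserves the integrals, turns the harmonic mean $2xy/(x+y)$ into the arithmetic mean $(u+v)/2$, and the convexity estimate $u^{2u/(u+v)}v^{2v/(u+v)}\ge((u+v)/2)^2$ (which is just midpoint convexity of $t\mapsto t\log t$) legitimately upgrades the hypothesis to
\[
h_0\bigl(\tfrac{u+v}{2}\bigr)\ \ge\ h_1(u)^{u/(u+v)}\,h_2(v)^{v/(u+v)},\qquad u,v>0.
\]

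The gap is in the second half, and it is real. Once you freeze $\alpha^\ast$ and set $A(r)=h_1(2\alpha^\ast r)$, $B(r)=h_2(2(1-\alpha^\ast)r)$, what you obtain is only the \emph{pointwise} bound $h_0(r)\ge A(r)^{\alpha^\ast}B(r)^{1-\alpha^\ast}$ for each $r$. This is not a Pr\'ekopa--Leindler or Borell--Brascamp--Lieb hypothesis: those require $h_0((1-\lambda)a+\lambda b)\ge M_\lambda(A(a),B(b))$ for \emph{all} pairs $(a,b)$, whereas here $a$ and $b$ are tied together ($a=b=r$). Integrating a pointwise geometric-mean bound yields no lower bound for $\int h_0$ in terms of $\int A$ and $\int B$ (take $A,B$ with nearly disjoint supports to see this), so the ``optimal $\alpha^\ast$'' step does not close.

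The layer-cake reduction also fails, for a structural reason. Even in the most favourable situation (decreasing $f_i$, so the superlevel sets are intervals $[0,a_i(t)]$, and equal thresholds $t_1=t_2=t$), the hypothesis gives $|\{f_0>t\}|\ge \mathrm{HM}\bigl(|\{f_1>t\}|,|\{f_2>t\}|\bigr)$ with $\mathrm{HM}(a,b)=2ab/(a+b)$. But $\mathrm{HM}$ is \emph{concave} and $1$-homogeneous, so Jensen yields $\int \mathrm{HM}(a_1(t),a_2(t))\,dt\le \mathrm{HM}\bigl(\int a_1,\int a_2\bigr)$, which is the wrong direction to chain with $\int f_0\ge\int \mathrm{HM}(a_1,a_2)$. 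For unequal thresholds the situation is worse, because the exponent $y/(x+y)$ varies with the point and the level-set inclusion only controls $\{f_0>\min(t_1,t_2)\}$. In short, the hypothesis is not preserved under layer-cake in any way that lets you superpose the indicator case.

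So the proposal is a correct reformulation but not yet a proof; the genuine difficulty (variable exponents) is identified but not overcome. The arguments in \cite{Ball88} and \cite{Me90} handle precisely this point by a different device; you should consult those rather than pursue the two routes above.
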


\section{The functional Steiner symmetrization}

The familiar definition of Steiner symmetrization for a nonnegative
measurable function $f$ can be stated as following (see
\cite{Br74,Bu97,Bu09,Fo10}):
\begin{Def}\label{29}
For a measurable function $f:\mathbb{R}^n\rightarrow[0,+\infty)$ and
an affine hyperplane $H\subset\mathbb{R}^n$, let $m$ denote the
Lebesgue measure, if $m(\{f>t\})<+\infty$ for all $t>0$, then its
Steiner symmetrization is defined as
\begin{eqnarray}\label{1}
S_H f(x)=\int_{0}^{\infty}\mathcal {X}_{S_H \{f>t\}}(x)dt,
\end{eqnarray}
where $\mathcal {X}_{A}$ denotes the characteristic function of set
$A$.
\end{Def}
Next, we give a approach of defining Steiner symmetrization for
coercive convex functions by the Steiner symmetrization of
epigraphs. A similar functional steiner symmetrization is defined in
a remark of AKM's paper \cite{Ar04} and studied in an article by
Lehec \cite{Le08}. The idea of our definition is same as the given
definition in a remark at the end of an article by Fradelizi and
Meyer \cite{Fr07}.

\begin{Def}\label{19}
For a coercive convex function $\phi$ and an affine hyperplane
$H\subset\mathbb{R}^n$, we define the {\it Steiner symmetrization}
$S_H \phi$ of $\phi$ with respect to $H$ as a function satisfying
\begin{eqnarray}\label{17}
{\rm epi}(S_H \phi)=S_{\widetilde{H}}({\rm cl}\;{\rm epi} \phi),
\end{eqnarray}
where
$\widetilde{H}=\{(x^{\prime},s)\in\mathbb{R}^{n+1}:x^{\prime}\in
H\}$ is an affine hyperplane in $\mathbb{R}^{n+1}$.
\end{Def}
\begin{rem}

(i) By Definition \ref{19}, for an integrable log-concave function
$f=e^{-\phi}$, the Steiner symmetrization of $f$ can be defined as
$S_H f:=e^{-(S_H \phi)}$. If we define $S_H f$ by Definition
\ref{29}, then $S_H f$ still satisfies (\ref{17}). Thus, for
integrable log-concave functions, the two definitions are
essentially same.

(ii) By Definition \ref{19}, for a given $x^{\prime}\in H$ and any
$s\in\mathbb{R}$, we have $V_1\left(\{(S_H
\phi)(x^{\prime}+tu)<s\}\right)=V_1\left(\{\phi(x^{\prime}+tu)<s\}\right)$.
By the Fubini's theorem, we have
\begin{eqnarray}\label{21}\int_{\mathbb{R}}(S_H f)(x^{\prime}+tu)dt
=\int_{\mathbb{R}}f(x^{\prime}+tu)dt.\end{eqnarray} Similarly,
$\int_{\mathbb{R}^n} S_H f=\int_{\mathbb{R}^n} f$ is also
established.
\end{rem}

\begin{prop}\label{7} For a coercive convex function
$\phi$ and an affine hyperplane $H\subset\mathbb{R}^n$ with outer
unit normal vector $u$, then $S_H \phi$ has the following
properties.

(i) $S_H \phi$ is a closed coercive convex function and symmetric
about $H$.

(ii) Let $H_1$ and $H_2$ be two orthogonal hyperplanes in
$\mathbb{R}^n$, then $S_{H_2}(S_{H_1} \phi)$ is symmetric about both
$H_1$ and $H_2$.

(iii) For any given $x^{\prime}\in H$ and $t\in\mathbb{R}$, let
$\phi_1(t):=\phi(x^{\prime}+tu)$ and $(S\phi_1)(t):=(S_H
\phi)(x^{\prime}+tu)$, then $(S\phi_1)(t)$ satisfies one of the
following three cases. {\it 1)}.
$(S\phi_1)(t)=\phi_1(t_1)=\phi_1(t_1-2t)$ for some
$t_1\in\mathbb{R}$. {\it 2)}. $(S\phi_1)(t)=\phi_1(t_0-2t)\geq
\lim_{t\rightarrow t_0,\;t<t_0}\phi_1(t)$ for some
$t_0\in\mathbb{R}$. {\it 3)}. $(S\phi_1)(t)=\phi_1(t_0+2t)\geq
\lim_{t\rightarrow t_0,\;t>t_0}\phi_1(t)$ for some
$t_0\in\mathbb{R}$.
\end{prop}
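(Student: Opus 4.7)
The plan is to establish (i) and (ii) directly from the epigraph definition of $S_H\phi$, and then to reduce (iii) to a one-dimensional analysis on each line parallel to $u$. For (i), I would rely on the fact that the Steiner symmetrization of a closed convex set is closed and convex, so $S_{\widetilde{H}}(\mathrm{cl}\,\mathrm{epi}\,\phi)$ is a closed convex subset of $\mathbb{R}^{n+1}$. To confirm this set is actually the epigraph of a function, I would verify upward-closedness in the last coordinate: $\mathrm{cl}\,\mathrm{epi}\,\phi$ is upward-closed, and Steiner symmetrization preserves inclusion of horizontal slices, so this property is inherited. Symmetry of $S_H\phi$ about $H$ follows from the symmetry of $S_{\widetilde{H}}$ about $\widetilde{H}$; coercivity follows since each sub-level set $\{S_H\phi \leq s\}$ is the Steiner symmetrization of the bounded set $\{\phi \leq s\}$, hence bounded.

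For (ii), symmetry of $S_{H_2}(S_{H_1}\phi)$ about $H_2$ is an instance of (i). For symmetry about $H_1$, set $\psi := S_{H_1}\phi$, which is symmetric about $H_1$ by (i). Because $u_1 \perp u_2$, the reflection $\sigma_{H_1}$ across $H_1$ sends each line $y + \mathbb{R}u_2$ to $\sigma_{H_1}(y) + \mathbb{R}u_2$, and the slices $r \mapsto \psi(y+ru_2)$ and $r \mapsto \psi(\sigma_{H_1}(y)+ru_2)$ agree. Since $S_{H_2}$ acts slice-by-slice along the $u_2$-direction, this forces $(S_{H_2}\psi)(\sigma_{H_1}(p)) = (S_{H_2}\psi)(p)$ for every $p \in \mathbb{R}^n$, which is exactly the desired symmetry about $H_1$.

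For (iii), fix $x' \in H$; the definition of $S_H\phi$ restricts on the vertical plane through $x'$ to the planar Steiner symmetrization $\mathrm{epi}(S\phi_1) = S_{\{0\}\times\mathbb{R}}(\mathrm{cl}\,\mathrm{epi}\,\phi_1)$. Writing $K(s) = \{\phi_1 \leq s\} = [\alpha(s), \beta(s)]$ (a closed interval, possibly empty, coming from the closed convex set $\mathrm{cl}\,\mathrm{epi}\,\phi_1$) and $L(s) = \beta(s) - \alpha(s)$, the symmetrized slice at height $s$ becomes $[-L(s)/2, L(s)/2]$, so that
\[
(S\phi_1)(t) \;=\; \inf\{s : L(s) \geq 2|t|\}.
\]
Let $s_* = (S\phi_1)(t)$. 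By right-continuity of $K(\cdot)$ in $s$, $L(s_*) \geq 2|t|$, so one can select $\tau_1 \leq \tau_2$ in $K(s_*)$ with $\tau_2 - \tau_1 = 2|t|$. I would then case-split on the behavior of $\phi_1$ at $\tau_1, \tau_2$: if $\phi_1(\tau_1) = \phi_1(\tau_2) = s_*$, setting $t_1 = \tau_2$ yields Case~1; if instead $\tau_2 = t_0 = \sup(\mathrm{dom}\,\phi_1)$ with $\phi_1(t_0) < s_*$, then $\phi_1 = +\infty$ on $(t_0,\infty)$ and minimality of $s_*$ forces $s_* \geq \lim_{r \to t_0^-}\phi_1(r)$, yielding Case~2; Case~3 is the mirror image where $\tau_1$ is at the left endpoint of $\mathrm{dom}\,\phi_1$.

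The main obstacle will be the careful bookkeeping in (iii), specifically justifying the inequalities $(S\phi_1)(t) \geq \lim_{r \to t_0}\phi_1(r)$ in Cases~2 and~3. The argument hinges on the minimality of $s_*$: if the one-sided limit of $\phi_1$ at $t_0$ strictly exceeded $s_*$ while $\phi_1(t_0) \leq s_*$, one could decrease $s$ slightly below $s_*$ and still have $L(s) \geq 2|t|$, contradicting the definition of $s_*$ as an infimum. One must also check that the case distinction is exhaustive, ruling out the scenario $\phi_1(\tau_1) < s_*$ and $\phi_1(\tau_2) < s_*$ simultaneously by invoking convexity (the maximum of $\phi_1$ on $[\tau_1,\tau_2]$ would be attained at an endpoint, hence strictly below $s_*$, again contradicting minimality).
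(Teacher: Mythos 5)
Your parts (i) and (ii) follow essentially the same route as the paper: (i) rests on the fact that Steiner symmetrization preserves closed convexity of the epigraph (you add useful detail, namely upward-closedness and boundedness of sub-level sets, which the paper simply asserts as ``clear''); (ii) hinges on the epigraph of $S_{H_2}(S_{H_1}\phi)$ being symmetric about both $\widetilde{H_1}$ and $\widetilde{H_2}$, and you correctly identify that the orthogonality of $H_1$ and $H_2$ is what makes $S_{H_2}$ act slice-by-slice in a way compatible with the existing symmetry about $H_1$ --- a point the paper leaves entirely implicit. For (iii), however, your method is genuinely different from the paper's. The paper fixes the line $x'+\mathbb{R}u$, enumerates the eight possible shapes of $\mathrm{dom}\,\phi_1$ (closed/open/half-open, bounded/unbounded), and then for each subdivides according to the finiteness and ordering of the one-sided limits $b_1, b_2$ of $\phi_1$ at the endpoints; the three stated cases emerge from this enumeration. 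You instead derive the closed-form expression $(S\phi_1)(t) = \inf\{s : L(s) \geq 2|t|\}$ with $L(s)$ the length of the sub-level slice, and then case-split on whether the chosen endpoints $\tau_1,\tau_2$ of that slice are interior to $\mathrm{dom}\,\phi_1$ (yielding Case~1 via continuity and minimality of $s_*$) or lie on the boundary of the domain (yielding Cases~2/3, with the one-sided-limit inequality coming from the minimality of $s_*$). Your reformulation is cleaner and avoids the paper's eight-fold enumeration; it also exposes the genuine mechanism behind Cases~2 and~3 (namely, $\phi_1$ may fail to attain $s_*$ at a domain endpoint) better than the paper does. Be aware, though, that both your sketch and the paper's elide some boundary bookkeeping: the paper's step ``there exists $\gamma\in(\alpha,\beta)$ with $\phi_1(\gamma)=b_1$'' can fail when $\phi_1$ is monotone on its whole domain with the infimum only approached at the open endpoint, and your own argument still needs to verify $\phi_1(\tau_1)=s_*$ in the Case~2 configuration and to treat the sub-case where both $\tau_1$ and $\tau_2$ sit at domain endpoints; you flag this as ``careful bookkeeping,'' which is fair, and the gaps are fillable.
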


\begin{proof}
(i)  By the fact that $\phi$ is convex if and only if ${\rm
epi}\phi$ is convex, since $\phi$ is convex, ${\rm epi} \phi$ is a
convex subset of $\mathbb{R}^{n+1}$. Since the closure of a convex
set is convex, and the Steiner symmetrization of a convex set is
also convex, by (\ref{17}), ${\rm epi}(S_H \phi)$ is a convex subset
of $\mathbb{R}^{n+1}$. Therefore, $S_H \phi$ is a convex function.
By Definition \ref{19}, it is clear that $S_H \phi$ is closed,
coercive and symmetric with respect to $H$.

(ii) Since ${\rm epi}(S_{H_2}(S_{H_1} \phi))$ is symmetric about
both $\widetilde{H_1}$ and $\widetilde{H_2}$, where
$\widetilde{H_i}=\{(x^{\prime},s)\in\mathbb{R}^{n+1}:x^{\prime}\in
H_i\}$ ($i=1,2$),  $S_{H_2}(S_{H_1} \phi)$ is symmetric about both
$H_1$ and $H_2$.

(iii) If ${\rm dom}\phi_1=\mathbb{R}$, by (\ref{17}) in Definition
\ref{19}, we have
\begin{eqnarray}\label{33}
{\rm epi}(S\phi_1)=S_{\widetilde{H}}({\rm cl}\;{\rm epi}\phi_1).
\end{eqnarray} Thus there exists some $t_1\in\mathbb{R}$ satisfying
\begin{eqnarray}\label{4}
(S\phi_1)(t)=\phi_1(t_1)=\phi_1(t_1-2t).
\end{eqnarray}

If ${\rm dom}\phi_1\neq\mathbb{R}$, then there exist eight cases for
${\rm dom}\phi_1$: 1) $[\alpha,\beta]$; 2) $(\alpha,\beta)$; 3)
$(\alpha,\beta]$; 4) $[\alpha,\beta)$; 5) $(-\infty,\beta]$; 6)
$(-\infty,\beta)$; 7) $[\alpha,+\infty)$; 8) $(\alpha,+\infty)$.
Here, we only prove our conclusion for ${\rm
dom}\phi_1=(\alpha,\beta)$. By the same method we can prove our
conclusion for other cases.  For ${\rm dom}\phi_1=(\alpha,\beta)$,
by Definition \ref{19}, it is clear that $(S\phi_1)(t)=+\infty$ for
$|t|\geq\frac{\beta-\alpha}{2}$. If $|t|<\frac{\beta-\alpha}{2}$,
let
$\lim_{x\rightarrow\alpha,\;x>\alpha}\phi_1(x)=b_1,\;\;\lim_{x\rightarrow\beta,\;x<\beta}\phi_1(x)=b_2$,
then we consider the following four cases. (a) If $b_1=b_2=+\infty$,
then by (\ref{33}), there exists some $t_1\in\mathbb{R}$ satisfying
(\ref{4}). (b) If $b_1<+\infty,\;\; b_2=+\infty$, then there exists
$\gamma\in(\alpha,\beta)$ such that $\phi_1(\gamma)=b_1$. Then by
(\ref{33}), for $|t|<\frac{\gamma-\alpha}{2}$, (\ref{4}) is
established, for $|t|\geq\frac{\gamma-\alpha}{2}$, we have
$(S\phi_1)(t)=\phi_1(\alpha+2t)\geq b_1$. (c) If
$b_1=+\infty,\;\;b_2<+\infty$, then there exists
$\gamma\in(\alpha,\beta)$ such that $\phi_1(\gamma)=b_2$. Then by
(\ref{33}), for $|t|<\frac{\beta-\gamma}{2}$, (\ref{4}) is
established, for $|t|\geq\frac{\gamma-\alpha}{2}$, we have
$(S\phi_1)(t)=\phi_1(\beta-2t)\geq b_2$. (d) If
$b_1<\infty,\;\;b_2<+\infty$, we consider three cases. If $b_1=b_2$,
then (\ref{4}) is established. If $b_1>b_2$, the proof is same as in
(c). If $b_1<b_2$, the proof is same as in (b). This completes the
proof.
\end{proof}

\section{The proofs of theorems}

In order to prove theorems stated in the introduction, we have to
establish the following six lemmas:
\begin{lem}\label{11}
If $f$ be a log-concave function such that $0<\int f<\infty$, then
the function $F$ defined by $F(z):=\int_{\mathbb{R}^n}f^{z}(x)dx$
has the following properties. {\rm(}i{\rm)} $F(z)$ is a coercive
convex function on
 $\mathbb{R}^n$ and is strictly convex on ${\rm int}\;{\rm dom}F$; {\rm(}ii{\rm)} If $f(x)$ is even about $z_0$, then $F(z)$ is also
even about $z_0$.
\end{lem}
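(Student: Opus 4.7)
The plan is to reduce $F$ to a Laplace-type transform of the log-concave function $f^0$, which makes everything fairly transparent. Writing $f = e^{-\phi}$, a direct expansion of $\langle x-z, y-z\rangle$ yields the identity
\[\mathcal{L}^z\phi(y) = \mathcal{L}\phi(y-z) - \langle z,\,y-z\rangle,\]
so substituting $w = y - z$ in $F(z) = \int_{\mathbb{R}^n} e^{-\mathcal{L}^z\phi(y)}\,dy$ gives the key representation
\[F(z) = \int_{\mathbb{R}^n} f^0(w)\,e^{\langle z, w\rangle}\,dw.\]

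For part (i), convexity is then immediate: for each fixed $w$ the map $z \mapsto e^{\langle z, w\rangle}$ is convex and $f^0(w) \geq 0$. To upgrade to strict convexity on ${\rm int}\,{\rm dom}\,F$, I would note that for distinct $z_0, z_1$ the pointwise exponential inequality is strict off the measure-zero hyperplane $\{w : \langle z_0 - z_1, w\rangle = 0\}$; strictness of the integral inequality follows as soon as one knows ${\rm supp}\,f^0$ has non-empty interior. The latter is a standard consequence of the exponential tail bound for integrable log-concave functions: $f(x) \leq \alpha e^{-\beta|x|}$ for some $\alpha, \beta > 0$, which implies $\mathcal{L}\phi(w) \leq \log\alpha$ for $|w| \leq \beta$ and hence $f^0 \geq 1/\alpha$ on a small ball about the origin. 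Coercivity uses the same lower bound through a spherical-cap estimate:
\[F(z) \geq \frac{1}{\alpha}\int_{|w| \leq \beta} e^{\langle z, w\rangle}\,dw \geq c_1 e^{c_2|z|}\]
for positive constants $c_1, c_2$, so $F(z) \to \infty$ as $|z| \to \infty$.

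For part (ii), suppose $f(z_0 + x) = f(z_0 - x)$. Performing the reflection substitutions $x \mapsto 2z_0 - x$ and $y \mapsto 2z_0 - y$ in
\[f^{z_0+a}(y) = \inf_{x \in \mathbb{R}^n} \frac{e^{-\langle x - (z_0+a),\,y - (z_0+a)\rangle}}{f(x)},\]
and using $f(2z_0 - x) = f(x)$ together with the algebraic identity $x - (z_0+a) = -((2z_0 - x) - (z_0-a))$ (and similarly for $y$), one obtains $f^{z_0+a}(y) = f^{z_0-a}(2z_0 - y)$. Integrating in $y$ and using the reflection invariance of Lebesgue measure then yields $F(z_0+a) = F(z_0-a)$, i.e.\ $F$ is even about $z_0$.

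The step needing the most care is the strict-convexity claim: one has to verify that ${\rm supp}\,f^0$ is not contained in any proper affine subspace through the relevant point, which reduces to showing $f$ has at least exponential decay in every direction. Everything else follows either directly from the Laplace representation or from the short symmetry computation above.
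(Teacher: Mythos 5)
Your proof follows essentially the same route as the paper's: the Laplace-transform representation $F(z)=\int_{\mathbb{R}^n} f^{0}(w)\,e^{\langle z,w\rangle}\,dw$, the exponential tail bound on integrable log-concave $f$ giving $f^{0}\geq c>0$ on a small ball about the origin (hence coercivity by the cap estimate, and strict convexity because the pointwise inequality for $e^{\langle\cdot,w\rangle}$ is strict off the hyperplane $\{w:\langle z_1-z_2,w\rangle=0\}$), and a reflection substitution for part (ii). The one thing the paper verifies that you leave out is that $F$ is \emph{proper}, i.e.\ $\mathrm{dom}\,F\neq\emptyset$: this is needed both for $F$ to qualify as a convex function in the paper's convention and for $\mathrm{int}\,\mathrm{dom}\,F$ to be nonempty so that the strict-convexity assertion has content (your lower bound $F\geq c_1 e^{c_2|z|}$ shows coercivity but says nothing about finiteness). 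The paper handles this in a short claim: for $z\in\mathrm{int}\,\mathrm{supp}\,f$, choosing $r>0$ with $z+rB_2^n\subset\mathrm{supp}\,f$ and $M=\sup\{\phi(y):y\in z+rB_2^n\}<\infty$ gives the Gaussian domination $f^{z}(x)\leq e^{M}e^{-r|x-z|^{2}}$, hence $F(z)<\infty$. With that one check inserted, your argument matches the paper's.
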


\begin{proof}
(i) {\bf Step 1.} We shall prove $F$ is coercive. Let $f=e^{-\phi}$,
for any given $z\in\mathbb{R}^n$ and $r>0$, we have
\begin{eqnarray}\label{b23}
F(z)=\int_{\mathbb{R}^n}f^{z}(x+z)dx
\geq\int_{rB_2^n}f^{z}(x+z)dx=\int_{rB_2^n}e^{-\mathcal
{L}\phi(x)+\langle x,z\rangle}dx.
\end{eqnarray}

Since $f=e^{-\phi}$ is integrable, there is $\gamma>0$ and
$h\in\mathbb{R}$ such that
\begin{eqnarray}\label{35}
\phi(x)\geq\gamma\sum_{i=1}^{n}|x_i|+h\;\;{\rm for}\;{\rm
any}\;x\in\mathbb{R}^n.
\end{eqnarray}
Thus, for $y\in\gamma B_\infty^n$, where
$B_\infty^n=\{x\in\mathbb{R}^n:|x_i|\leq 1,i=1,\dots,n\}$, $\mathcal
{L}\phi(y)\leq\sup_{x\in\mathbb{R}^n}[\langle y,
x\rangle-\gamma\sum_{i=1}^{n}|x_i|-h]\leq -h$. Let
$rB_2^n\subset\frac{1}{2}\gamma B_\infty^n$, we have
$rB_2^n\subset{\rm int}({\rm dom}\mathcal {L}\phi)$. Since function
$g(x):\;=e^{-\mathcal {L}\phi(x)}$ is continuous on $rB_2^n$. Thus,
there exists $m>0$ such that $g(x)\geq m$ for any $x\in rB_2^n$.
Therefore,
\begin{eqnarray}\label{b26}
\int_{rB_2^n}e^{-\mathcal {L}\phi(x)+\langle x,z\rangle}dx\geq
m\int_{rB_2^n}e^{\langle x,z\rangle}dx.
\end{eqnarray}

For any $z\in\mathbb{R}^n$ and $|z|\geq 1$, let
$z^{\prime}=\frac{r}{2}\frac{z}{|z|}$, we get a closed  half-space
$H^{+}=\{x\in\mathbb{R}^n: \langle x-z^{\prime},z\rangle\geq 0\}$.
For any $x\in H^{+}$, we have $\langle x,z\rangle\geq \langle
z^{\prime}, z\rangle=\frac{r}{2}|z|$. Therefore,
\begin{eqnarray}\label{b30}\int_{rB_2^n}e^{\langle x,z\rangle}dx
&\geq&\int_{(rB_2^n)\cap H^{+}}e^{\frac{r|z|}{2}}dx=V_n((rB_2^n)\cap
H^{+})e^{\frac{r|z|}{2}}.\end{eqnarray} Since $V_n((rB_2^n)\cap
H^{+})$ is a positive constant independent of $z$, by (\ref{b23}),
(\ref{b26}) and (\ref{b30}), $F(z)$ is coercive.

 {\bf Step 2.} We shall prove that $F$ is convex and is strictly convex on ${\rm int}\;{\rm dom}F$. First, we prove $F(z)$ is proper. It is clear that $F(z)>-\infty$
for any $z\in\mathbb{R}^n$. The following claim shows that
$\{F=\infty\}\neq\mathbb{R}^n$.
\begin{cla}\label{13} For any
$z\in{\rm int}\;{\rm supp}f$, $F(z)<\infty$.
\end{cla}
\noindent{\it Proof of Claim \ref{13}.}
 For any $z\in{\rm
int}\;{\rm supp}f$, there is a closed ball $z+rB_2^n\subset{\rm
supp}f$. Since ${\rm supp}f={\rm dom}\phi$, there is
$M\in\mathbb{R}$ such that $M=\sup\{\phi(y): y\in z+rB_2^n\}$. Thus,
we have $$f^z(x)\leq \exp\{-\sup_{y\in (z+rB_2^n)}[\langle
x-z,y-z\rangle-\phi(y)]\}\leq e^M \cdot e^{-r|x-z|^2}.$$
 Therefore,
$\int_{\mathbb{R}^n} f^z(x)dx\leq e^M\int_{\mathbb{R}^n}
e^{-r|x-z|^2}dx<\infty.$ \hfill$\Box$\\

For any $z_1,z_2\in\mathbb{R}^n$ and $\alpha\in (0,1)$. Let
$f=e^{-\phi}$, we have $F(z)=\int_{\mathbb{R}^n}e^{-\mathcal
{L}\phi(x)+\langle x,z\rangle}dx$. Since $g_x(z):=e^{-\mathcal
{L}\phi(x)+\langle x,z\rangle}$ is a convex function about $z$, we
have
\begin{eqnarray}\label{b37}
F(\alpha z_1+(1-\alpha)z_2)\leq\alpha F(z_1)+(1-\alpha)F(z_2).
\end{eqnarray}

If $z_1,z_2\in{\rm int}\;{\rm dom}F$ and $z_1\neq z_2$, then
inequality (\ref{b37}) is a strict inequality. Thus $F(z)$ is
strictly convex on ${\rm int}\;{\rm dom}F$.

(ii) Since $f(x)$ is even about $z_0$, $f(z_0+x)=f(z_0-x)$ for any
$x\in\mathbb{R}^n$. For any $z\in \mathbb{R}^n$, we have $$F(z_0+z)
=\int_{\mathbb{R}^n}f^{z_0+z}(x)dx=\int_{\mathbb{R}^n}f^{z_0-z}(-x+2z_0)dx
=F(z_0-z).$$ This completes the proof.
\end{proof}
\begin{rem} By Lemma \ref{11}, if $f$ is even about $z_0$, then $s(f)=z_0$.
\end{rem}

\begin{lem} \label{68}Let $f$ be
a log-concave function such that $0<\int f<\infty$, and let
$G\subset\mathbb{R}^n$ be an affine subspace satisfying $G\cap{\rm
int}\;{\rm supp}f\neq\emptyset$. Then there exists a unique point
$z_0\in G$ satisfying the following two equivalent claims. (i)
$F(z_0)=\min\{F(z); z\in G\}$, where
$F(z):=\int_{\mathbb{R}^n}f^{z}(x)dx$. (ii) ${\rm grad}
F(z_0)=\int_{\mathbb{R}^n} x f^{z_0}(x+z_0)dx\in G^{\bot}$.
\end{lem}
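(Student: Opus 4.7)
The strategy is to apply the convex-analytic structure of $F$ established in Lemma \ref{11} together with a direct computation of ${\rm grad}\,F$, and to recognize (i) $\Leftrightarrow$ (ii) as the first-order optimality condition for convex minimization on an affine subspace.

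For existence and uniqueness, by Lemma \ref{11} the function $F$ is convex and coercive on $\mathbb{R}^n$ and strictly convex on ${\rm int}\;{\rm dom}\,F$. Claim \ref{13}, combined with convexity of $F$, yields ${\rm int}\;{\rm supp}\,f \subseteq {\rm int}\;{\rm dom}\,F$; consequently the hypothesis $G\cap{\rm int}\;{\rm supp}\,f\neq\emptyset$ guarantees that $F|_G$ is finite on a nonempty relatively open subset of $G$. Being coercive and lower semicontinuous, $F|_G$ attains its minimum on $G$, and strict convexity on ${\rm int}\;{\rm dom}\,F$ enforces uniqueness: any two minimizers would be joined by a segment whose relative interior lies in ${\rm int}\;{\rm dom}\,F$, contradicting strict convexity there.

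For the equivalence of (i) and (ii), the key computational step is to write $f=e^{-\phi}$ and observe, via the change of variable $y=u+z$ inside the supremum defining $\mathcal{L}^{z}\phi$, the identity
\begin{equation*}
\mathcal{L}^{z}\phi(u+z)=\sup_{x\in\mathbb{R}^n}\bigl[\langle x-z,u\rangle-\phi(x)\bigr]=\mathcal{L}\phi(u)-\langle z,u\rangle,
\end{equation*}
so that
\begin{equation*}
F(z)=\int_{\mathbb{R}^n}e^{\langle z,u\rangle-\mathcal{L}\phi(u)}\,du.
\end{equation*}
This is the Laplace transform of the positive measure $e^{-\mathcal{L}\phi(u)}\,du$, hence real-analytic on ${\rm int}\;{\rm dom}\,F$; differentiating under the integral sign gives
\begin{equation*}
{\rm grad}\,F(z)=\int_{\mathbb{R}^n}u\,e^{\langle z,u\rangle-\mathcal{L}\phi(u)}\,du=\int_{\mathbb{R}^n}x\,f^{z}(x+z)\,dx.
\end{equation*}
Letting $V$ denote the linear subspace parallel to $G$, the standard first-order optimality condition for $\min_{z\in G}F(z)$ at $z_{0}\in G\cap{\rm int}\;{\rm dom}\,F$ reads $\langle{\rm grad}\,F(z_{0}),v\rangle=0$ for every $v\in V$, i.e.\ ${\rm grad}\,F(z_{0})\in V^{\perp}=G^{\perp}$, which is exactly the equivalence of (i) and (ii).

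The main obstacle I expect is verifying that the minimizer $z_{0}$ actually lies in ${\rm int}\;{\rm dom}\,F$ rather than on $\partial\,{\rm dom}\,F$, since the gradient formula and the integrability of the first moment in (ii) both require this interior regularity. I would handle it by exploiting the Laplace-transform representation (all moments of $e^{-\mathcal{L}\phi(u)}\,du$ remain finite on the interior of the convergence domain) together with a convex-analysis argument: strict convexity of $F$ on ${\rm int}\;{\rm dom}\,F$, applied along segments joining a fixed interior point of $G\cap{\rm dom}\,F$ to any putative boundary minimizer, rules out the latter.
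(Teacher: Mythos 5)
Your proof is correct and follows essentially the same route as the paper: existence and uniqueness of the minimizer from coercivity plus strict convexity (Lemma \ref{11}), the gradient formula via the Legendre-transform rewriting $F(z)=\int e^{\langle z,u\rangle-\mathcal{L}\phi(u)}\,du$ and differentiation under the integral sign, and the equivalence of (i) and (ii) as the first-order optimality condition on the affine subspace $G$. Your concern about the minimizer lying in ${\rm int}\,{\rm dom}\,F$ (needed to justify differentiability at $z_0$) is a legitimate point that the paper's proof passes over silently, and your sketch of how to rule out boundary minimizers is a sensible way to close that gap.
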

\begin{proof} By Lemma \ref{11}, $F$ is coercive and strictly convex on ${\rm int}\;{\rm dom}F$, thus there is a
unique minimal point $z_0=s_G(f)$ on $G$. Let $f=e^{-\phi}$, then
$F(z)=\int_{\mathbb{R}^n}e^{-\mathcal {L}\phi(x)+\langle
x,z\rangle}dx$. By the dominated convergence theorem, we have ${\rm
grad} F(z)=\int_{\mathbb{R}^n}xe^{-\mathcal {L}\phi(x)+\langle
x,z\rangle}dx=\int_{\mathbb{R}^n}xf^z(x+z)dx$.

Next, we prove the equivalence of  (i) and (ii). Let
$\eta_1,\dots,\eta_m\;(m<n)$ be an orthonormal basis of $G$ and let
$\eta_{m+1},\dots,\eta_n$ be an orthonormal basis of $G^{\perp}$.
Let $z=\sum_{i=1}^{n}z_i\eta_i$, since $z_0=s_G(f)\in G$, we have
$\left.\frac{\partial F(z)}{\partial z_i}\right|_{z=z_0}=
\lim_{t\rightarrow
0}\frac{F(z_0+t\eta_i)-F(z_0)}{t}=0,\;\;i=1,\dots,m$. Hence, ${\rm
grad} F(z_0)\in G^{\bot}$. On the other hand, if ${\rm grad}
F(z_0)\in G^{\bot}$, then $\left.\frac{\partial F(z)}{\partial
z_i}\right|_{z=z_0}=0,\;i=1,\dots,m$. Since $F(z)$ is strictly
convex on $G\cap{\rm int}\;{\rm dom}F$, $z_0$ is the unique minimal
point on $G$.
\end{proof}
\begin{rem}
In Lemma \ref{68}, if $G=\mathbb{R}^n$, then the lemma shows that
the Santal\'{o} point $s(f)$ of $f$ is the barycenter of the
function $f^{s(f)}$.
\end{rem}

\begin{lem}\label{73}
Let $f$ be a log-concave function such $0<\int f<\infty$. Let
$G\subset\mathbb{R}^n$ be an affine subspace  satisfying $G\cap{\rm
int}\;{\rm supp}f\neq\emptyset$ and $z=s_G(f)$. Let $H$ be an affine
hyperplane such that $G\subset H$ and let $g$ be the function
defined by $g^{z}=S_H (f^{z})$. Then we have $s_G(g)=z=s_G(f)$.
\end{lem}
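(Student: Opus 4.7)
\textbf{Proof proposal for Lemma \ref{73}.}

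The plan is to translate the identity $s_G(g)=z$ into a first-moment condition via Lemma \ref{68} and then exploit the symmetry of $g^z = S_H(f^z)$ about $H$ to reduce it to the already-known condition for $f$. Writing $F_h(w) := \int h^w$, Lemma \ref{68} gives the equivalence $s_G(h)=z \iff \int x\, h^z(x+z)\,dx \in G^{\perp}$. Since $z = s_G(f)$ is given, we already know that $\int x\,f^z(x+z)\,dx \in G^{\perp}$, so it is enough to prove
\begin{equation*}
\int_{\mathbb{R}^n} x\, g^z(x+z)\, dx \;\in\; G^{\perp}.
\end{equation*}

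For the main computation, let $u$ be a unit normal to $H$ and decompose each $x \in \mathbb{R}^n$ as $x = y'+tu$ with $y' \in H-z$ (a linear hyperplane, since $z \in G \subset H$). By Proposition \ref{7}(i), $g^z = S_H(f^z)$ is symmetric about $H$, so $t \mapsto g^z(z+y'+tu)$ is even for each fixed $y'$. Splitting $x = y'+tu$ in the moment integral and using this evenness to kill the $tu$-component,
\begin{equation*}
\int_{\mathbb{R}^n} x\, g^z(x+z)\, dx \;=\; \int_{H-z} y' \Bigl(\int_{\mathbb{R}} g^z(z+y'+tu)\, dt\Bigr) dy'.
\end{equation*}
By the fiberwise identity \eqref{21}, the inner integral equals $\int_{\mathbb{R}} f^z(z+y'+tu)\,dt$. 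Performing the analogous decomposition of $\int x\,f^z(x+z)\,dx$, but without the cancellation step (since $f^z$ need not be symmetric about $H$), produces the same first summand plus an extra term of the form $\beta u$ for some $\beta \in \mathbb{R}$. Hence
\begin{equation*}
\int_{\mathbb{R}^n} x\, g^z(x+z)\, dx \;=\; \int_{\mathbb{R}^n} x\, f^z(x+z)\, dx \;-\; \beta u.
\end{equation*}
The first summand on the right lies in $G^{\perp}$ by Lemma \ref{68} applied to $f$, and $\beta u \in G^{\perp}$ because $u \perp H \supset G$. The left-hand side is therefore in $G^{\perp}$, and the converse direction of Lemma \ref{68} applied to $g$ yields $s_G(g)=z$, as required.

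The auxiliary points needed along the way — that $g$ satisfies the hypotheses of Lemma \ref{68} (log-concavity, $0<\int g<\infty$, and $G \cap {\rm int}\,{\rm supp}\,g \ne \emptyset$), and that the Fubini exchange and odd-function cancellation above are justified — are routine consequences of the preservation properties of $S_H$ in Proposition \ref{7}(i), the invariance \eqref{21} and $\int S_H f^z = \int f^z$ from the remark after Definition \ref{19}, and the standard exponential decay of integrable log-concave functions. I do not foresee any substantive obstacle; the entire argument rests on the single geometric fact that symmetrization of $f^z$ about a hyperplane $H \supset G$ annihilates the normal component of the first-moment integral up to a scalar multiple of the unit normal $u$, which itself already lies in $G^{\perp}$ because $G \subset H$.
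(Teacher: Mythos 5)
Your proof is correct and follows essentially the same approach as the paper: reduce the claim via Lemma~\ref{68} to a first-moment condition, then use the fiberwise invariance \eqref{21} to transfer it from $f^z$ to $g^z$. The symmetry step you insert to kill the $u$-component of the moment of $g^z$ is harmless but unnecessary; the paper simply checks the components of the moment integral along the $G$-directions (which lie inside $H$ and are therefore determined by the fiberwise integrals alone) and never appeals to the symmetry of $g^z$ about $H$.
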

\begin{proof} It may be supposed that $z=s_G(f)=0$,
$H=\{(x_1,\cdots,x_n)\in\mathbb{R}^n:x_n=0\}$ and
$G=\{(x_1,\cdots,x_n)\in\mathbb{R}^n:x_{m+1}=\cdots=x_{n}=0\}$ for
some $m$, $1\leq m\leq n-1$. By Lemma \ref{68}, we have
$\int_{\mathbb{R}^n}xf^{0}(x)dx\in G^{\bot}$. Let
$f^{0}_{x^{\prime}}(t):=f^{0}(x^{\prime}+tu)$ for any $x^{\prime}\in
H$, where $u$ is the unit normal vector of $H$. Thus,
$\int_{H}x_i\left(\int_{\mathbb{R}}f^{0}_{x^{\prime}}(t)dt\right)
dx^{\prime}=0\;\;\textrm{for} \;\;1\leq i\leq m$. By $g^{0}=S_H
(f^{0})$ and (\ref{21}), for every $x^{\prime}\in H$,
$\int_{\mathbb{R}}f^{0}_{x^{\prime}}(t)=\int_{\mathbb{R}}g^0_{x^{\prime}}(t)$.
Thus,
$\int_{H}x_i\left(\int_{\mathbb{R}}g^{0}_{x^{\prime}}(t)dt\right)
dx^{\prime}=0\;\;\textrm{for} \;\;1\leq i\leq m$, which conversely
gives $\int_{\mathbb{R}^n}x g^{0}(x)dx\in G^{\bot}$. Thus, by Lemma
\ref{68} again, we obtain $s_G(g)=0=s_G(f)$.
\end{proof}

\begin{lem}\label{71}
For a log-concave function $f$ such that $0<\int f<\infty$, if $f$
is symmetric about some affine hyperplane $H$, then, for any $z\in
H$, $f^{z}$ is also symmetric about $H$.
\end{lem}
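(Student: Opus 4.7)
The plan is to pass to $\phi = -\log f$, which is convex, and show that $\mathcal{L}^z \phi$ inherits the symmetry of $\phi$ about $H$; since $f^z = e^{-\mathcal{L}^z \phi}$, this gives the claim. The crucial point is that $z \in H$, because this means $z$ has no component in the direction $u$ normal to $H$, so we can cleanly separate the $H$-directions from the $u$-direction in the inner product.

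After translating so $z = 0$ and choosing coordinates with $H = \{x_n = 0\}$ and $u = e_n$, write every point as $x = x' + tu$ with $x' \in H$, and similarly $y = y' + su$ with $y' \in H$. Since $z=0\in H$, we have
\begin{equation*}
\langle x-z, y-z\rangle = \langle x', y'\rangle + ts,
\end{equation*}
so the Legendre transform factors nicely:
\begin{equation*}
\mathcal{L}^{0}\phi(y) = \sup_{x' \in H}\Bigl[\langle x', y'\rangle + \sup_{t\in\mathbb{R}}\bigl(ts - \phi(x'+tu)\bigr)\Bigr].
\end{equation*}
The hypothesis that $f$ is symmetric about $H$ says $\phi(x'+tu) = \phi(x'-tu)$ for every $x' \in H$ and $t \in \mathbb{R}$. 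Substituting $t \mapsto -t$ in the inner supremum and using this symmetry shows the inner sup is an even function of $s$. Therefore $\mathcal{L}^{0}\phi(y'+su) = \mathcal{L}^{0}\phi(y'-su)$, i.e.\ $\mathcal{L}^{0}\phi$ is symmetric about $H$, and hence so is $f^{z}=e^{-\mathcal{L}^{z}\phi}$.

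I don't expect any real obstacle. The only thing to be careful about is the role of the hypothesis $z \in H$: without it, $z$ would have a nontrivial $u$-component, the cross term $\langle x-z, y-z\rangle$ would mix $t$ with the $H$-components of $y-z$, and the parity argument in the inner sup would break. Reducing to $z=0$ by translation and picking coordinates adapted to $H$ makes everything transparent, and the entire argument is two or three lines once the decomposition is in place.
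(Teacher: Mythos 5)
Your proof is correct and is essentially the paper's argument in Legendre-transform clothing: the paper performs the same substitution $s\mapsto -s$ in the optimization variable directly in the infimum defining $f^z$, using the orthogonal decomposition $\langle y'+tu-z,\,x'+su-z\rangle=\langle y'-z,x'-z\rangle+ts$ (valid precisely because $z\in H$), together with $f(x'+su)=f(x'-su)$, to conclude $f^z(y'+tu)=f^z(y'-tu)$. Phrasing it via $\phi$, the nested sup, and the evenness in $s$ of the inner supremum is a cosmetic repackaging of the identical computation, so there is nothing to flag.
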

\begin{proof} Let $u$ be the unit normal vector of $H$. For any
$x^{\prime},y^{\prime}\in H$ and $s,t\in \mathbb{R}$, since
$f(x^{\prime}+su)=f(x^{\prime}-su)$, we have
\begin{eqnarray*}\label{72}
f^{z}(y^{\prime}+tu)&=& \inf_{x^{\prime}+su\in
\mathbb{R}^n}\frac{\exp\{-\langle y^{\prime}+tu-z,
x^{\prime}+su-z\rangle\}}{f(x^{\prime}+su)}
\nonumber\\
&=&\inf_{x^{\prime}+su\in \mathbb{R}^n}\frac{\exp\{-\langle
y^{\prime}-z-tu, x^{\prime}-z-su\rangle\}}{f(x^{\prime}-su)}=
f^{z}(y^{\prime}-tu).
\end{eqnarray*}
This completes the proof.
\end{proof}

\begin{lem}\label{79} Let $f$ be a
log-concave function such that $0<\int f<\infty$ and let $H$ be an
affine hyperplane satisfying $H\cap{\rm int}\;{\rm
supp}f\neq\emptyset$ and $z\in H\cap{\rm int}\;{\rm supp}f$; let
$\lambda$, $0<\lambda<1$ such that $H$ is $\lambda$-separating for
$f^{z}$. Then
$$\int_{\mathbb{R}^n}(S_H f)^{z}\geq 4\lambda(1-\lambda)\int_{\mathbb{R}^n}f^{z}.$$
\end{lem}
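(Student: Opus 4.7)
The plan is to apply Lemma \ref{74} to the $u$-direction marginals of $(S_Hf)^z$ and $f^z$ after first establishing a Prékopa–Leindler-type pointwise inequality on $H$. The starting point is a one-dimensional Legendre-transform inequality.

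\emph{Step 1 (1D inequality).} Let $\phi$ be $1$D coercive convex with level sets $\{\phi\le c\}=[a(c),b(c)]$, and let $\tilde\phi$ be its Steiner symmetrization about $0$. For $x,y,t>0$, parametrizing the sup over $s$ by $c=\phi(s)$ yields
$$\phi^*(x)=\sup_c[xb(c)-c],\;\;\phi^*(-y)=\sup_c[-ya(c)-c],\;\;\tilde\phi^*(t)=\sup_c\bigl[\tfrac{t(b(c)-a(c))}{2}-c\bigr].$$
Since $\tfrac{y}{x+y}\phi^*(x)+\tfrac{x}{x+y}\phi^*(-y)=\sup_{c_1,c_2}\bigl[\tfrac{xy(b(c_1)-a(c_2))}{x+y}-\tfrac{yc_1+xc_2}{x+y}\bigr]$, and the diagonal $c_1=c_2=c$ reproduces $\tilde\phi^*(\tfrac{2xy}{x+y})$, one obtains
\begin{equation}\label{eq:1d}
\tilde\phi^*\!\bigl(\tfrac{2xy}{x+y}\bigr)\le\tfrac{y}{x+y}\phi^*(x)+\tfrac{x}{x+y}\phi^*(-y),\qquad x,y>0.
\end{equation}

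\emph{Step 2 (fibre lift).} Write $f=e^{-\phi}$, $\psi=-\log f^z$, $\tilde\psi=-\log(S_Hf)^z$, and $\phi_{x''}(s):=\phi(x''+su)$ for $x''\in H$. Because $z\in H$, $u\perp H$, and $(S_Hf)(x''+su)=e^{-\tilde\phi_{x''}(s)}$ with $\tilde\phi_{x''}$ the $1$D symmetrization of $\phi_{x''}$ about $0$ (Definition \ref{19}), a direct calculation gives
$$\psi(y'+tu)=\sup_{x''\in H}[\langle y'-z,x''-z\rangle+\phi_{x''}^*(t)],\;\;\tilde\psi(y'+tu)=\sup_{x''\in H}[\langle y'-z,x''-z\rangle+\tilde\phi_{x''}^*(t)].$$
Fix $z_1,z_2\in H$ and set $y':=\alpha z_1+(1-\alpha)z_2$ with $\alpha:=\tfrac{y}{x+y}$ and $t:=\tfrac{2xy}{x+y}$. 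Decomposing $\langle y'-z,x''-z\rangle=\alpha\langle z_1-z,x''-z\rangle+(1-\alpha)\langle z_2-z,x''-z\rangle$ and applying \eqref{eq:1d} to $\phi_{x''}$, one gets, for every $x''\in H$,
$$\langle y'-z,x''-z\rangle+\tilde\phi_{x''}^*(t)\le\alpha[\langle z_1-z,x''-z\rangle+\phi_{x''}^*(x)]+(1-\alpha)[\langle z_2-z,x''-z\rangle+\phi_{x''}^*(-y)].$$
Taking $\sup_{x''}$ of the left side and using $\sup(A+B)\le\sup A+\sup B$ on the right yields $\tilde\psi(y'+tu)\le\alpha\psi(z_1+xu)+(1-\alpha)\psi(z_2-yu)$, i.e.
$$(S_Hf)^z(y'+tu)\ge f^z(z_1+xu)^{y/(x+y)}\,f^z(z_2-yu)^{x/(x+y)}.$$

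\emph{Step 3 (Prékopa and Ball).} The last pointwise bound on $H$ is exactly the Prékopa hypothesis for the Asplund product $(\alpha\cdot f_1)\star((1-\alpha)\cdot f_2)$ with $f_1(w):=f^z(w+xu)$, $f_2(w):=f^z(w-yu)$ log-concave on $H$. Integrating over $H$ via Prékopa gives $U_0(t)\ge U_1(x)^{y/(x+y)}U_2(y)^{x/(x+y)}$, where $U_0(s):=\int_H(S_Hf)^z(y'+su)\,dy'$, $U_1(s):=\int_Hf^z(y'+su)\,dy'$, $U_2(s):=\int_Hf^z(y'-su)\,dy'$. Applying Lemma \ref{74} to $U_0,U_1,U_2$ on $(0,+\infty)$ gives $\int_0^{+\infty}U_0\ge\frac{2\int_0^{+\infty}U_1\cdot\int_0^{+\infty}U_2}{\int_0^{+\infty}U_1+\int_0^{+\infty}U_2}$. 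Because $(S_Hf)^z$ is symmetric about $H$ by Lemma \ref{71}, $\int_0^{+\infty}U_0=\tfrac12\int_{\mathbb{R}^n}(S_Hf)^z$; combined with $\int_0^{+\infty}U_1=\int_{H_+}f^z$, $\int_0^{+\infty}U_2=\int_{H_-}f^z$, and the $\lambda$-separating hypothesis $\int_{H_+}f^z\cdot\int_{H_-}f^z=\lambda(1-\lambda)(\int f^z)^2$, this yields $\int_{\mathbb{R}^n}(S_Hf)^z\ge 4\lambda(1-\lambda)\int_{\mathbb{R}^n}f^z$.

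The main obstacle is Step 2, the passage from \eqref{eq:1d} to a multi-dimensional Prékopa-ready inequality. Although \eqref{eq:1d} itself is a short computation on level sets, one must realise that the \emph{same} parameter $x''\in H$ has to appear in the fibre representations of $\psi$ at all three points $y'+tu,z_1+xu,z_2-yu$: only then does the bilinear term $\langle\cdot-z,\cdot-z\rangle$ split linearly under $y'=\alpha z_1+(1-\alpha)z_2$, allowing the subsequent sub-additivity of $\sup$ and the application of Prékopa on $H$ to assemble cleanly.
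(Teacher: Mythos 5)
Your proof is correct and its overall architecture matches the paper's: establish a pointwise bound of the form needed for the Asplund product/Pr\'ekopa, integrate over $H$ to obtain $F_0(\tfrac{2st}{s+t})\geq F_1(s)^{t/(s+t)}F_2(t)^{s/(s+t)}$, then apply Lemma~\ref{74} together with the symmetry of $(S_Hf)^z$ about $H$ and the $\lambda$-separating hypothesis. Where you diverge is in how you reach the pointwise inequality (the paper's~(\ref{80})). The paper stays on the primal side: it uses the $\inf$ formula for $(S_Hf)^z$, invokes Proposition~\ref{7}(iii) (packaged as the paper's Claim~2) to bound $(S_Hf)(x',w)$ from above by a geometric mean $f(x',w_1)^{t/(s+t)}f(x',w_1-2w)^{s/(s+t)}$, and then splits the exponent $\exp\{-\langle(y',\tfrac{2st}{s+t}),(x',w)\rangle\}$ and applies $\inf(AB)\geq(\inf A)(\inf B)$. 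You work on the dual side: you isolate the $1$D Legendre inequality $\tilde\phi^{*}(\tfrac{2xy}{x+y})\leq\tfrac{y}{x+y}\phi^{*}(x)+\tfrac{x}{x+y}\phi^{*}(-y)$ via the level-set parametrization $\{\phi\le c\}=[a(c),b(c)]$, then exploit the fibre decomposition $\psi(y'+tu)=\sup_{x''\in H}[\langle y'-z,x''-z\rangle+\phi_{x''}^{*}(t)]$ (valid because $z\in H$ and $u\perp H$ make the bilinear form split) and finish with $\sup(\alpha A+(1-\alpha)B)\leq\alpha\sup A+(1-\alpha)\sup B$. Your route avoids the somewhat awkward case analysis in Proposition~\ref{7}(iii) by packaging the $1$D content of Steiner symmetrization directly into the Legendre-transform computation on level sets, which makes the ``one-dimensional engine'' of the argument cleaner and more explicit; the paper's route is shorter in exposition because Proposition~\ref{7}(iii) is already available. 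Both correctly produce the same $n$-dimensional Pr\'ekopa hypothesis, so the remainder of the proof coincides.
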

\begin{proof} It may be supposed that $z=0$ and $H=\{(x_1,\dots,x_n):
x_n=0\}$. For $y^{\prime}\in H$ and $s\in\mathbb{R}$, let
$(y^{\prime},s)$ denote $y^{\prime}+su$, where $u$ is a unit normal
vector of $H$. For $f^0$ and $s\in\mathbb{R}$, we define a new
function
$$f^0_{(s)}(y^{\prime}):=f^0(y^{\prime},s),\;{\rm for\; any}\;y^{\prime}\in H.$$

 Next we shall prove that for any $y^{\prime}\in H$ and $s,t>0$
\begin{eqnarray}\label{80}
\left(\frac{t}{s+t}\cdot
f_{(s)}^{0}\right)\star\left(\frac{s}{s+t}\cdot
f_{(-t)}^{0}\right)(y^{\prime})\leq (S_H f)^{0}_{
(\frac{2st}{s+t})}(y^{\prime}).
\end{eqnarray}
\begin{cla}\label{15}
For any $x^{\prime}\in H$ and $w\in\mathbb{R}$, if $(S_H
f)(x^{\prime}+wu)>0$, then there is some $w_1\in\mathbb{R}$ such
that $(S_H f)(x^{\prime}+wu)\leq f(x^{\prime}+w_1u)$ and $(S_H
f)(x^{\prime}+wu)\leq f(x^{\prime}+(w_1-2w)u)$.
\end{cla}
\noindent{\it Proof of Claim \ref{15}.} Let $f=e^{-\phi}$, since
$(S_H f)(x^{\prime}+wu)>0$, then $(S_H\phi)(x^{\prime}+wu)<+\infty$.
By Proposition \ref{7}(iii), there is $w_1\in\mathbb{R}$ such that
$(S_H\phi)(x^{\prime}+wu)\geq \phi(x^{\prime}+w_1u)$ and
$(S_H\phi)(x^{\prime}+wu)\geq \phi(x^{\prime}+(w_1-2w)u)$, here we
assume $\phi(x^{\prime}+w_1u)$ or $\phi(x^{\prime}+(w_1-2w)u)$
equals the limit in Proposition \ref{7}(iii), which doesn't affect
our proof. Hence the claim follows.\hfill$\Box$\\
\indent For any $y_1^{\prime}$, $y_2^{\prime}\in H$ such that
$y^{\prime}=y_1^{\prime}+y_2^{\prime}$, we have
\begin{eqnarray*}
(S_H f)_{ (\frac{2st}{s+t})}^{0}(y^{\prime})
&=&\inf_{(x^{\prime},w)\in
H\times\mathbb{R}}\frac{\exp\{-\langle(y^{\prime},\frac{2st}{s+t}),(x^{\prime},w)\rangle\}}{(S_H
f)(x^{\prime},w)}\nonumber\\
&\geq&\inf_{(x^{\prime},w)\in
H\times\mathbb{R}}\frac{\exp\{-\langle(y^{\prime},\frac{2st}{s+t}),(x^{\prime},w)\rangle\}}{f(x^{\prime},w_1)^{\frac{t}{s+t}}f(x^{\prime},w_1-2w)^{\frac{s}{s+t}}}\nonumber\\
&\geq&\inf_{(x^{\prime},w)\in
H\times\mathbb{R}}\frac{\exp\{-\frac{t}{s+t}\langle(\frac{s+t}{t}y_1^{\prime},s),(x^{\prime},w_1)\rangle\}}{f(x^{\prime},w_1)^{\frac{t}{s+t}}}\nonumber\\
&&\times\inf_{(x^{\prime},w)\in
H\times\mathbb{R}}\frac{\exp\{-\frac{s}{s+t}\langle(\frac{s+t}{s}y_2^{\prime},-t),(x^{\prime},w_1-2w)\rangle\}}{f(x^{\prime},w_1-2w)^{\frac{s}{s+t}}}\nonumber\\
&\geq&f^0\left(\frac{s+t}{t}y_1^{\prime},s\right)^{\frac{t}{s+t}}f^0\left(\frac{s+t}{s}y_2^{\prime},-t\right)^{\frac{s}{s+t}},
\end{eqnarray*}
where the first inequality is by Claim \ref{15}, and the second
inequality is by $\inf(AB)\geq(\inf A)(\inf B)$, and last inequality
is by the definition of the polar of functions. Since $y^{\prime}_1$
and $y^{\prime}_2$ are arbitrary, we get (\ref{80}).

Let $F_0(w)=\int_{H}(S_H f)^{0}_{(w)}$, $F_1(s)=\int_{H}f^{0}_{(s)}$
and $F_2(t)=\int_{H}f^{0}_{(-t)}$. By the Pr\'{e}kopa
inequality and (\ref{80}), we have $$F_0(\frac{2st}{s+t})\geq
F_1(s)^{\frac{t}{s+t}}F_2(t)^{\frac{s}{s+t}}\;{\rm for}\;{\rm
every}\;s,t>0.$$ Now, by Proposition \ref{7}(i) and Lemma \ref{71},
$(S_H f)^{0}$ is symmetric about $H$, we have
$\int_{0}^{+\infty}F_0=\frac{1}{2}\int_{\mathbb{R}^n}(S_H f)^0$ and
since $H$ is $\lambda$-separating for $f^{0}$, we have
$\left(\int_{0}^{+\infty}F_1\right)\left(\int_{0}^{+\infty}F_2\right)=\lambda(1-\lambda)\left(\int_{\mathbb{R}^n}f^0\right)^2$.
Since $F_0$, $F_1$, $F_2:[0,+\infty)\rightarrow \mathbb{R}^{+}$
satisfy the hypothesis of Lemma \ref{74}, and by definitions of
$F_1$ and $F_2$, one has
$\int_{0}^{+\infty}F_1+\int_{0}^{+\infty}F_2=\int_{\mathbb{R}^n}f^0$,
thus, by Lemma \ref{74}
\begin{eqnarray*}\label{86}
\frac{2}{\int_{\mathbb{R}^n}(S_H
f)^0}\leq\frac{1}{2}\left(\frac{1}{\int_{0}^{+\infty}F_1}+\frac{1}{\int_{0}^{+\infty}F_2}\right)=\frac{1}{2\lambda(1-\lambda)\int_{\mathbb{R}^n}f^0}.
\end{eqnarray*}
This gives the desired inequality.
\end{proof}

\begin{lem} \label{77} If $f$ is an integrable, unconditional, log-concave function, then
$\int_{\mathbb{R}^n}f\int_{\mathbb{R}^n}f^{0}\leq (2\pi)^n$.
\end{lem}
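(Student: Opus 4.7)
My plan is to prove Lemma~\ref{77} by induction on the dimension $n$. Because $f$ is unconditional, it is symmetric about every coordinate hyperplane; by Lemma~\ref{71} so is $f^{0}$, and Lemma~\ref{11}(ii) together with Lemma~\ref{68} then forces the Santal\'o point of $f$ to be $0$. The base case $n=1$ reduces to the one-dimensional Blaschke--Santal\'o inequality for even log-concave functions, which is Ball's inequality \cite{Ba86}; alternatively, one can derive it directly by applying Lemma~\ref{74} to the restrictions of $f$ and $f^{0}$ to $[0,\infty)$ together with the pairing $f(x)f^{0}(y)\le e^{-xy}$.

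For the inductive step I would assume the lemma in dimension $n-1$ and slice $f$ in the last coordinate. For $s\in\mathbb{R}$ the section $f_{s}(x'):=f(x',s)$ is log-concave and unconditional on $\mathbb{R}^{n-1}$, so by the inductive hypothesis $\int f_{s}\cdot\int (f_{s})^{0}\le(2\pi)^{n-1}$. Fixing $s$ in the infimum that defines $f^{0}$ yields $f^{0}(y',t)\le e^{-st}(f_{s})^{0}(y')$ for every $y'\in\mathbb{R}^{n-1}$ and every $s,t\in\mathbb{R}$. Integrating in $y'$ and combining with the inductive bound, I obtain
\begin{equation*}
G_{1}(s)\,G_{2}(t)\ \le\ (2\pi)^{n-1}e^{-st}\qquad\text{for all }s,t\ge 0,
\end{equation*}
where $G_{1}(s):=\int_{\mathbb{R}^{n-1}}f(x',s)\,dx'$ and $G_{2}(t):=\int_{\mathbb{R}^{n-1}}f^{0}(y',t)\,dy'$. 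By Pr\'ekopa's theorem $G_{1}$ and $G_{2}$ are log-concave, by unconditionality of $f$ and $f^{0}$ they are even, and Fubini gives $\int_{\mathbb{R}^{n}}f=\int_{\mathbb{R}}G_{1}$ and $\int_{\mathbb{R}^{n}}f^{0}=\int_{\mathbb{R}}G_{2}$.

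To finish I would normalise $\widetilde G_{i}:=G_{i}/(2\pi)^{(n-1)/2}$, turning the previous display into $\widetilde G_{1}(s)\widetilde G_{2}(t)\le e^{-st}$ for $s,t\ge 0$. Since an even log-concave function is unimodal and attains its maximum at $0$, for $t\ge 0$ the one-dimensional polar $\widetilde G_{1}^{\,0}(t)=\inf_{s\in\mathbb{R}}e^{-st}/\widetilde G_{1}(s)$ is actually attained on $s\ge 0$; combined with the above bound this gives $\widetilde G_{2}(t)\le \widetilde G_{1}^{\,0}(t)$ for $t\ge 0$, and then for all $t\in\mathbb{R}$ by evenness. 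Applying the $n=1$ base case to $\widetilde G_{1}$ yields $\int \widetilde G_{1}\cdot\int \widetilde G_{1}^{\,0}\le 2\pi$, so
\begin{equation*}
\int_{\mathbb{R}^{n}}f\cdot\int_{\mathbb{R}^{n}}f^{0}=\int_{\mathbb{R}}G_{1}\cdot\int_{\mathbb{R}}G_{2}\ \le\ (2\pi)^{n-1}\int_{\mathbb{R}}\widetilde G_{1}\cdot\int_{\mathbb{R}}\widetilde G_{1}^{\,0}\ \le\ (2\pi)^{n}.
\end{equation*}
The main obstacle is the one-dimensional base case; once it is granted, the induction is driven entirely by the sectional pairing $f(x',s)f^{0}(y',t)\le e^{-\langle x',y'\rangle-st}$ and by the log-concavity of the marginals supplied by Pr\'ekopa.
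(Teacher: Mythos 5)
Your proof is correct but follows a genuinely different route from the paper's. The paper proves Lemma~\ref{77} in a single shot: it substitutes $x_i = e^{y_i}$ on the positive orthant to turn $f$, $f^0$, and the Gaussian into three functions $g_1,g_2,g_3$ on all of $\mathbb{R}^n$, checks the pointwise relation $g_1(s)g_2(t) \le g_3\bigl(\frac{s+t}{2}\bigr)^2$ from the polar duality $f(x)f^0(y)\le e^{-\langle x,y\rangle}$, and concludes directly by Pr\'ekopa. That argument is uniform in $n$ and is, in essence, a self-contained proof of Ball's inequality. You instead induct on $n$: slicing in the last coordinate, bounding $G_1(s)G_2(t)\le (2\pi)^{n-1}e^{-st}$ via the inductive hypothesis and the polar pairing, observing that the normalized marginal $\widetilde G_2$ is dominated by the one-dimensional polar $\widetilde G_1^{\,0}$, and then applying the base case. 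Both approaches lean on Pr\'ekopa (yours twice: once for log-concavity of the marginals, once inside the base case), but yours takes Ball's $n=1$ inequality as a black box, whereas the paper's argument proves it as a byproduct for every $n$. Your route is a bit longer but does expose a pleasant structural fact, that the marginals of $f$ and $f^0$ are themselves in polar-type duality.

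One small caveat: your ``alternative'' derivation of the $n=1$ base case from Lemma~\ref{74} is not as immediate as you suggest. The hypothesis of Lemma~\ref{74} requires $f_0\bigl(\frac{2xy}{x+y}\bigr)\ge f_1(x)^{y/(x+y)}f_2(y)^{x/(x+y)}$, and the polar pairing $h(x)h^0(y)\le e^{-xy}$ alone does not deliver this with $f_0=e^{-t^2/2}$ (the needed inequality fails for the extremal allocations of the logarithmic budget). The standard one-dimensional proof is exactly the paper's change-of-variable $x=e^u$ plus Pr\'ekopa, or one may simply cite Ball as you do; I would drop the Lemma~\ref{74} remark or supply the missing computation. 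Also, a minor simplification: your inequality $\widetilde G_1(s)\widetilde G_2(t)\le e^{-st}$ in fact holds for all $s,t\in\mathbb{R}$, not just $s,t\ge 0$, so the observation that the infimum defining $\widetilde G_1^{\,0}(t)$ is attained on $s\ge 0$ is unnecessary; you can pass directly to $\widetilde G_2\le \widetilde G_1^{\,0}$.
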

\begin{proof} Let $f_1=f$, $f_2=f^{0}$ and $f_3=e^{-\frac{|x|^2}{2}}$, then $f_1$, $f_2$ and
$f_3$ are unconditional. Thus we have
$\int_{\mathbb{R}^n}f_j=2^n\int_{\mathbb{R}_+^n}f_j,\;\;j=1,2,3$.
For $(y_1,\dots,y_n)\in\mathbb{R}^n$, we define
$g_i(y_1,\dots,y_n)=f_i(e^{y_1},\dots,e^{y_n})e^{\sum_{i=1}^{n}y_i}$.
We get $\int_{\mathbb{R}_+^n}f_j=\int_{\mathbb{R}^n}g_j$, and for
every $s,t\in\mathbb{R}^n$, $g_1(s)g_2(t)\leq
g_3\left(\frac{s+t}{2}\right)^2$. Hence
$\int_{\mathbb{R}^n}f\int_{\mathbb{R}^n}f^{0}\leq (2\pi)^n$ follows
from Pr\'{e}kopa inequality.
\end{proof}

\noindent{\it Proof of Theorem \ref{2}.} We proceed by $n$
successive Steiner symmetrizations until we get an unconditional
log-concave function.

Let $u_1\in S^{n-1}$, $u_1$ orthogonal to $H=H_1$ and let
$(u_i)_{i=2}^{n}\subset S^{n-1}$ such that $(u_1,\dots,u_n)$ form an
orthonormal basis for $\mathbb{R}^n$. Let $z_1=s_{H_1}(f)$ and
define a log-concave function $f_1$ by the identity
$f_1^{z_1}=S_{H_1}(f^{z_1})$. Then $\int f_1^{z_1}=\int f^{z_1}$. By
Proposition \ref{7}(i) and Lemma \ref{71}, $f_1$ is symmetric about
$H_1$ and by Lemma \ref{79}, applied to $f^{z_1}$, $z=z_1$ and
$H=H_1$, $\lambda$-separating for $f=(f^{z_1})^{z_1}$, we get
$\int_{\mathbb{R}^n}f_1\geq 4\lambda(1-\lambda)\int_{\mathbb{R}^n}f$
and thus $\int f_1\int f_1^{z_1}\geq 4\lambda(1-\lambda)\int f\int
f^{z_1}$. Choose now the hyperplane $H_2$, orthogonal to $u_2$, and
medial for $f_1$ and define $z_2=s_{(H_1\cap H_2)}(f_1)$. By Lemma
\ref{73} we have $z_1=s_{H_1}(f)=s_{H_1}(f_1)$, we get $\int
f_1^{z_2}=\min_{z\in H_1\cap H_2}\int f_1^{z}\geq\min_{z\in H_1}\int
f_1^{z}=\int f_1^{z_1}$. We define now a new log-concave function
$f_2$ by the identity $f_2^{z_2}=S_{H_2}(f_1^{z_2})$. By Proposition
\ref{7}(ii) and Lemma \ref{71}, $f_2$ is symmetric about both $H_1$
and $H_2$. Since $H_2$ is medial for $f_1$, we get by Lemma \ref{79}
applied to $f_1^{z_2}$, $z=z_2$ and $H=H_2$ that $\int f_2\geq\int
f_1$. Moreover, we have $\int f_2^{z_2}=\int S_{H_2}(f_1^{z_2})=\int
f_1^{z_2}\geq\int f_1^{z_1}$. It follows that $\int f_2\int
f_2^{z_2}\geq\int f_1\int f_1^{z_1}$.

We continue this procedure by choosing hyperplanes $H_2,\dots,H_n$,
points $z_2,\dots,z_n$, and defining log-concave functions
$f_2,\dots,f_n$ such that for $2\leq i\leq n$, we have (i) $H_i$ is
medial for $f_{i-1}$ and orthogonal to $u_i$; (ii) $z_i=s_{(H_1\cap
H_2\cap\dots\cap H_i)}(f_{i-1})$; (iii)
$f_i^{z_i}=S_{H_i}(f_{i-1}^{z_i})$.  From (ii) (iii) and Lemma
\ref{73}, we have $z_i=s_{(H_1\cap\dots\cap
H_i)}(f_{i-1})=s_{(H_1\cap\dots\cap H_i)}(f_i)$. Choosing $H_{i+1}$,
$z_{i+1}$, $f_{i+1}$ according to (i) (ii) (iii), we get thus $\int
f_{i+1}^{z_{i+1}}=\int S_{H_{i+1}}(f_{i}^{z_{i+1}})=\int
f_{i}^{z_{i+1}} \geq \int f_i^{s_{(H_1\cap\dots\cap H_i)}(f_i)}=\int
f_i^{z_i}$. Now, Lemma \ref{79} applied to $f_i^{z_{i+1}}$,
$z=z_{i+1}$ and $H_{i+1}$, medial for
$f_i=(f_i^{z_{i+1}})^{z_{i+1}}$, gives $\int f_{i+1}\geq\int f_i$.
Thus, $\int f_i\int f_i^{z_i}$ is an increasing sequence, for $2\leq
i\leq n$.
 Therefore, we have
$4\lambda(1-\lambda)\int f\int f^{z_1} \leq\int f_1\int
f_1^{z_1}\leq\dots \leq\int f_n\int f_n^{z_n}$.
 From
Proposition \ref{7}(ii), $f_n$ is an unconditional function about
$z_n$ and $z_n\in H_1\cap H_2\cap\dots\cap H_n$ is a center of
symmetry for $f_n$. By Lemma \ref{77}, we have $\int f\int
f^{z_1}\leq \frac{(2\pi)^n}{4\lambda(1-\lambda)}$,
this concludes the proof.\hfill$\Box$\\

\bibliographystyle{amsalpha}

\begin{thebibliography}{99}
\bibitem{Ar04} S. Artstein, B. Klartag, V. D. Milman, {\it On the Santal$\acute{o}$ point of a
function and a functional Santal$\acute{o}$ inequality}, Mathematika
54 (2004), 33-48.

\bibitem{Ba86} K. Ball, {\it Isometric problems in $l_p$ and sections of convex
sets}, Doctoral thesis, University of Cambridge, 1986.

\bibitem{Ball88} K. Ball, {\it Logarithmically concave functions and sections of convex sets in
$\mathbb{R}^n$}, Studia Math. 88 (1988), 69-84.

\bibitem{Bl85} W. Blaschke, {\it \"{u}ber affine Geometrie 7: Neue Extremeigenschaften von Ellipse und
Ellipsoid.} Wilhelm Blaschke Gesammelte Werke 3. Thales Verlag,
Essen (1985)

\bibitem{Br74} H. J. Brascamp, E. H. Lieb, J. M. Luttinger, {\it A General Rearrangement Inequality for Multiple
Integrals}, J. Funct. Anal. 17 (1974), 227-237.


\bibitem{Bu97} A. Burchard, {\it Steiner symmetrization is continuous in $W^{1,p}$}, Geom.
Funct. Anal. 7 (1997), 823-860.

\bibitem{Bu09} A. Burchard, {\it A short course on rearrangement inequalities}, available
at http://www.math.utoronto.ca/almut/rearrange.pdf, 2009.


\bibitem{Fo10} M. Fortier, {\it Convergence results for rearrangements: Old and
new}, M.S. Thesis, University of Toronto, December 2010.

\bibitem{Fr07} M. Fradelizi, M. Meyer, {\it Some functional forms of Blaschke-Santal$\acute{o}$
inequality}, Math. Z. 256 (2007), 379-395.


\bibitem{Le08} J. Lehec, {\it The symmetric property $\tau$ for the Gaussian
measure}, Ann. Fac. Sci. Toulouse Math. 17(6) (2008), 357-370.

\bibitem{Le09} J. Lehec, {\it Partitions and functional Santal\'{o}
inequality}, Arch. Math. 92 (2009), 89-94.

\bibitem{Leh09} J. Lehec, {\it A direct proof of the functional Santal\'{o} inequality}, C. R. Math. Acad. Sci.
Paris, 347 (2009), 55-58.

\bibitem{Lu97} E. Lutwak, G. Zhang, {\it Blaschke-Santal\'{o}
inequalities}, J. Differ. Geom. 47(1) (1997), 1-16.

\bibitem{Lu04} E. Lutwak, D. Yang, G. Zhang, {\it Moment-entropy
inequalities}, Ann. Probab. 32 (2004), 757-774.

\bibitem{Me90} M. Meyer, A. Pajor, {\it On the Blaschke Santal\'{o}
inequality}, Arch. Math. 55 (1990), 82-93.




\bibitem{Pr71} A. Pr¨¦kopa, {\it Logarithmic concave measures with applications to stochastic programming},
Acta Sci. Math. (Szeged) 32 (1971), 301-316.

\bibitem{Pr73} A. Pr¨¦kopa, {\it On logarithmic concave measures and functions}, Acta Sci. Math. (Szeged) 34
(1973), 339-343.


\bibitem{Sa49} L. A. Santal\'{o}, {\it An affine invariant for convex bodies of n-dimensional
space}. Port. Math. 8 (1949), 155-161.

\bibitem{Sc93} R. Schneider, {\it Convex Bodies: The Brunn-Minkowski
Theory}, Encyclopedia Math. Appl., vol. 44, Cambridge University
Press, Cambridge, 1993.


\end{thebibliography}

\end{document}